\newtheorem{theorem}{{\sc Theorem}}[section]
\newtheorem{cor}[theorem]{{\sc Corollary}}
\newtheorem{lemma}[theorem]{{\sc Lemma}}
\newtheorem{prop}[theorem]{{\sc Proposition}}
\theoremstyle{remark}
\newtheorem{remark}[theorem]{{\sc Remark}}
\theoremstyle{definition}
\newcommand{\R}{\mathbb{R} }
\newcommand{\N}{\mathbb{N} }
\newcommand{\F}{\mathcal{F}}
\newcommand{\Z}{\mathbb{Z}}
\newcommand{\calS}{\mathcal{S}}
\newcommand{\Prob}{\mathbb{P}}
\newcommand{\E}{\mathbb{E}}
\newcommand{\ga}{\gamma}
\newcommand{\Pot}{\mathcal{P}}
\newcommand{\om}{\omega}
\providecommand{\abs}[1]{\lvert #1\rvert}
\DeclareMathOperator{\Var}{Var}
\DeclareMathOperator{\Cov}{Cov}
\renewcommand{\phi}{\varphi}
\renewcommand{\epsilon}{\varepsilon}
\newcommand{\eps}{\varepsilon}
\renewcommand{\rho}{\varrho}
\begin{document}
\title{Recurrence for the frog model with drift on $\mathbb{Z}^d$}
\author{Christian D\"obler \and Lorenz Pfeifroth}
\thanks{Technische Universit\"at M\"unchen, Fakult\"at f\"ur Mathematik, Bereich M5, D-85748 M\"unchen, Germany. \\
christian.doebler@tum.de\\
{\it Keywords:} frog model, recurrence, transience, interacting random walks }
\begin{abstract}
In this paper we present a recurrence criterion for the frog model on $\Z^d$ with an i.i.d. initial configuration of sleeping frogs and such that the underlying random walk has a drift to the right.
\end{abstract}

\maketitle

\section{Introduction}\label{intro}
The frog model is a certain model of interacting random walks on a graph. Imagine a graph $G=(V,E)$ with a distinguished vertex $x_0\in V$, called the \textit{origin}. At time $0$, there is exactly one active frog at $x_0$ and on each vertex $x\in V\setminus\{x_0\}$ there is a number $\eta_x\in\Z_+:=\Z\cap [0,\infty)$ of sleeping frogs. 
The frog at $x_0$ now starts 
a nearest-neighbour random walk on the graph $G$. If it hits a vertex $x$ with $\eta_x>0$ sleeping frogs, they all become active at once and start performing  nearest-neighbour random walks, independently of each other and of the original frog. More generally, each time an active frogs hits a vertex $x\in V$ with $\eta_x>0$ sleeping frogs, they 
all become active at once and start nearest-neighbour random walks, independently of each other and of all other frogs. In this description, the transition function of the \textit{underlying random walk} is supposed to be the same for all frogs. The frog model is called \textit{recurrent}, if the probability that the origin $x_0$
is visited infinitely often equals $1$, otherwise the model is called \textit{transient}. The frog model with $V=\Z^d$, $E$ the set of nearest-neighbour edges on $\Z^d$, $x_0:=0$, $\eta_x=1$ for each $x\in\Z^d\setminus\{0\}$ and the underlying random walk being simple random walk (SRW) on $\Z^d$ was studied by Telcs and Wormald \cite{TeWo}.
They showed in particular that the frog model is recurrent for each dimension $d$. This result was refined by Popov \cite{Pop01}, who considered frogs in a random environment. More precisely, he considered the situation, where there is, for each $x\in\Z^d\setminus\{0\}$, originally one sleeping frog at $x$ with probability 
$p(x)$ and no frog with probability $1-p(x)$, independently of all other vertices, and found the exact rate of decay for the function $p(x)$ to distinguish transience from recurrence. Another modification of the model is to consider the frog model with death, allowing activated particles to disappear after a random, e.g. geometric, lifetime. 
Such a model, also with a random initial configuration of sleeping frogs, was analyzed by \cite{AMP02} who proved phase transition results for both survival and recurrence of the particle system using a slightly different definition of recurrence. Note that the frog model on $\Z^d$ (without death and) with SRW is trivially recurrent for $d=1,2$, due to P\'{o}lya's theorem. Thus, in \cite{GanSch} Gantert and Schmidt considered 
the frog model on $\Z$ with the underlying random walk having a drift to the right. They considered both fixed and i.i.d. random initial configurations $(\eta_x)_{x\in\Z\setminus\{0\}}$ of sleeping frogs and derived precise criteria to separate transience from recurrence. In the case of an i.i.d. initial configuration of sleeping frogs they 
also proved a $0-1$ law, which says that the probability of infinitely many returns to $0$ equals $1$, if $E[\log^+(\eta_1)]=\infty$, and equals $0$, otherwise, independently of the concrete value of the drift. The purpose of the present note is to prove that the frog model on $\Z^d$, $d\geq2$, with an i.i.d. initial configuration of sleeping frogs is recurrent, whenever the distribution giving the number of sleeping frogs per site is heavy-tailed enough. 
The paper is structured as follows: In Section \ref{mainthm} we give a precise description of the model we consider and state our main theorem, Theorem \ref{mt}. In Section \ref{proofmt} we give the proof of Theorem \ref{mt} and finally, in Section \ref{lemmas} we give proofs of two auxiliary lemmas, which we need in Section \ref{proofmt} 
in order to prove Theorem \ref{mt}.

\section*{Acknowledgements}
We would like to thank Silke Rolles and Nina Gantert for useful discussion und comments.

\section{Setting and main theorem}\label{mainthm}

As mentioned above, we consider recurrence of the frog model on $\Z^d$ with an i.i.d. initial configuration and such that the underlying random walk has a drift to the right. We denote by $\calS$ the set of all possible initial configurations of sleeping frogs, i.e. 
\begin{equation*}
\calS:=\bigl\{\eta=(\eta_x)_{x\in\Z^d\setminus\{0\}}\in\Z_+^{\Z^d\setminus\{0\}}\bigr\}\,.
\end{equation*}
Further, we denote by $p$ the transition function of the underlying nearest-neighbour random walk. Thus, letting $\mathcal{E}:=\{\pm e_j\,:\,1\leq j\leq d\}$, where $e_j$ denotes the $j$-th standard basis vector in $\R^d$, $j=1,\dotsc,d$, we assume that $p:\Z^d\rightarrow[0,\infty)$ is a function such that
\begin{equation*}
\sum_{e\in\mathcal{E}}p(e)=1
\end{equation*}
and $p(x)=0$ for all $x\in\Z^d\setminus\mathcal{E}$. In order to make the random walk irreducible, we will further assume that $0<p(e)<1$ holds for all $e\in\mathcal{E}$. Additionally, we will abuse notation to write $p(x,y):=p(y-x)$ also for the corresponding transition matrix. Since we assume that the underlying random walk has a drift to the right, we suppose that there is an $a\in(0,1)$ such that 
\begin{equation}\label{drift}
m:=\sum_{e\in\mathcal{E}}p(e)e=ae_1\,.
\end{equation}
Since the transition function $p$ will be kept fixed throughout, we omit it from the notation.
For a fixed $\eta\in\calS$ we denote by $P_\eta$ a probability measure on a suitable measurable space $(\Omega,\F)$, which describes the evolution of the frog model with initial configuration $\eta$ and underlying random walk given by the transition function $p$ as described in the introduction. We refrain from giving a mathematical construction of the frog model with respect to $\eta$ but refer the interested reader to \cite{Pop03}. Now, let $\mu$ be a probability distribution on $(\Z_+,\Pot(\Z_+))$ and let $\Prob_\mu$ be the corresponding product measure on $(\Z_+^{\Z^d\setminus\{0\}},\Pot(\Z_+)^{\otimes\Z^d\setminus\{0\}})$, i.e. 
$\Prob_\mu=\mu^{\otimes\Z^d\setminus\{0\}}$. The corresponding expectation operator will be denoted by $\E_\mu$. Finally, we denote by 
$P$ the $\Prob_\mu$-mixture of the measures $P_\eta$, i.e.
\begin{equation}\label{defanneal}
P(A)=\int_{\Z_+^{\Z^d\setminus\{0\}}} P_\eta(A) \Prob_\mu(d\eta)\,,\quad A\in\F\,.
\end{equation}
Thus, the measure $P$ describes the evolution of the frog model with respect to a random i.i.d. initial configuration 
$\eta$. From \eqref{defanneal} we can make the following easy but important observation: \\
\emph{An event $A\in\F$ holds $P$-a.s. if and only if it holds $P_\eta$-a.s. for $\Prob_\mu$-a.a. $\eta\in\calS$.}

With this notation at hand, we are ready to state the main result of this note:
\begin{theorem}\label{mt}
If, additionally to the above assumptions, the distribution $\mu$ is such that 
$\E_\mu\bigl[\log^+(\eta_x)^{\frac{d+1}{2}}\bigr]=\sum_{j=2}^\infty \log(j)^{\frac{d+1}{2}}\mu(j)=\infty$,
then the frog model with drift to the right and i.i.d. initial configuration $\eta\sim\Prob_\mu$ is recurrent, i.e.
\[P\bigl(0\text{ is visited infinitely often }\bigr)=1\,.\]
\end{theorem}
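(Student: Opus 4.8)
The plan is to produce, with $P$-probability one, infinitely many frogs that reach the origin, locating these returning frogs among the large piles that the heavy-tail hypothesis guarantees far out along the drift direction. Write a site as $x=(x_1,\tilde x)$ with $\tilde x\in\Z^{d-1}$, call $x_1$ its \emph{level}, and let $r(x)$ denote the probability that a single random walk with transition function $p$ started at $x$ ever visits $0$. By the observation following \eqref{defanneal} it suffices to exhibit the returns $P$-a.s. Throughout I would freely use monotonicity of the frog model in the initial configuration — discarding sleeping frogs can only destroy returns, since $\{0\text{ visited i.o.}\}$ is increasing in $\eta$ — which lets me restrict attention to a convenient sub-collection of piles and walks.

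First I would record an elementary lower bound on $r$. Forcing a walk started at $x$ to march straight to the origin gives $r(x)\ge c^{\,\|x\|_1}$ for a constant $c=c(p)\in(0,1)$, hence $r(x)\ge\rho^{\,x_1}$ for some fixed $\rho\in(0,1)$, uniformly over all sites at level $n$ whose transverse part satisfies $|\tilde x|\le\sqrt n$. Only the exponential order will matter below: a pile of $m$ frogs sitting at such an $x$, once activated, sends at least one frog to $0$ with probability $1-(1-r(x))^m\ge\tfrac12\min(1,m\,\rho^{\,x_1})$, so the pile is \emph{successful} as soon as $m\ge\rho^{-n}$, i.e.\ as soon as $\log^+\eta_x\ge n\log(1/\rho)$.

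The heart of the matter is an activation estimate. The initial frog advances ballistically in direction $e_1$ while the cloud of activated frogs spreads diffusively in the $d-1$ transverse directions, so I would aim to show that with high probability the activated set at level $n$ contains of order $n^{(d-1)/2}$ sites lying within transverse distance $O(\sqrt n)$ of the axis; the self-reinforcing role of the big piles — an activated pile of size $m$ immediately seeds a wide, dense transverse patch — is what keeps the front from thinning out. Granting such a lemma, the expected number of successful activated piles is, by the i.i.d.\ structure of the configuration, at least of order
\begin{equation*}
\sum_{n\ge1} n^{(d-1)/2}\,\Prob_\mu\bigl(\log^+\eta_x\ge n\log(1/\rho)\bigr)\asymp\E_\mu\bigl[(\log^+\eta_x)^{(d+1)/2}\bigr]=\infty ,
\end{equation*}
the comparison being the layer-cake identity $\sum_n n^{\alpha}\Prob_\mu(Y\ge cn)\asymp\E_\mu[Y^{\alpha+1}]$ with $\alpha=(d-1)/2$. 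Thus the exponent $(d+1)/2$ in the hypothesis is exactly the product of the $n^{(d-1)/2}$ transverse width of the cloud with the layer-cake counting of the tail of $\log^+\eta_x$.

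To upgrade an infinite expectation into infinitely many returns $P$-a.s.\ I would organise space into well-separated slabs of levels $[n_k,n_{k+1})$ with $n_{k+1}\gg n_k$, so that the piles met in different slabs are independent and the walks issued in one slab are fresh relative to the past. Conditioning on the cloud entering slab $k$ with the required transverse width, the event ``some pile in slab $k$ is successful'' has conditional probability whose sum over $k$ still diverges, and a conditional Borel--Cantelli lemma (L\'evy's extension) then forces infinitely many successful slabs, hence infinitely many returns to $0$. The main obstacle is precisely the activation lemma: rigorously bounding the transverse width of the activated region from below, and disentangling the dependence between activation, the configuration, and the individual walks so that the Borel--Cantelli step is legitimate. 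I expect this to require a careful block/renewal construction that propagates a ``wide cloud'' property from one slab to the next, using the large piles to refresh the width.
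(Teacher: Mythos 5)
Your outline reproduces the paper's strategy in spirit -- a parabolically wide activated cloud, exponentially small return probabilities beaten by heavy-tailed piles, and the bookkeeping $n^{(d-1)/2}\cdot\Prob_\mu(\log^+\eta_x\geq cn)$ summed by layer-cake to produce the exponent $\frac{d+1}{2}$ -- but the two steps you defer are not technical residue: they are the content of the proof, and the way you propose to finish the second one would fail. The activation estimate is the bulk of the paper: the hitting bound $f(x,y)\geq c_1(y_1-x_1)^{-\frac{d-1}{2}}$ inside a parabolic window (Lemma \ref{htlemma}, via the local CLT for the continuous-time walk), an inductive cascade over the regions $F_{k+i}$ of \eqref{Fn} with second-moment/Chebyshev control (Lemmas \ref{zetalemma} and \ref{Gicomp}) and Cram\'er's theorem to keep the number of active frogs at least $\alpha^{(d+1)(k+i)}$ at every scale. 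Crucially, each region is split into ``fuel'' $B_{i+1}$, whose $\eta$-values are consumed to drive the cascade, and reserved sites $D_{i+1}$, chosen using only the visitation indicators $\zeta_y$ and never their $\eta$-values; this is precisely what makes the configuration on $\bigcup_i D_i$ still i.i.d.\ after conditioning on the whole construction. Your computation of the expected number of successful piles invokes ``the i.i.d.\ structure of the configuration'' at activated sites, but in your scheme the same piles both sustain the cloud and send frogs home, so the configuration randomness is used twice; you name this obstacle, but the $B/D$ splitting that resolves it is a missing idea, not a detail.

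The step that would fail is the upgrade to probability one via slabs and L\'evy's extension of Borel--Cantelli. However the cascade is seeded, it keeps its parabolic width forever only with probability bounded away from $1$: in the paper, a seed of $\alpha^{(d+1)(k-1)}$ frogs gives survival probability at least $1-g(k)<1$, see \eqref{problb}. On the complementary (positive-probability) event that the cloud thins out, the conditional probabilities $\Prob(\text{success in slab }k\mid\mathcal{F}_{k-1})$ are no longer bounded below by your unconditional estimates, so their a.s.\ divergence -- which is exactly what L\'evy's lemma needs -- is the very thing you cannot establish; a block/renewal scheme propagating a ``wide cloud'' property from slab to slab cannot repair this, since every propagation step has failure probability bounded away from zero at fixed seed size. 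The idea your proposal lacks is the paper's restart mechanism: Lemma \ref{finlemma} applies the heavy-tail hypothesis (through Lemma \ref{ml}) along the infinite path of the \emph{initial} frog alone to show that, $P$-a.s., the seed event $A_k$ (the initial frog wakes a pile of at least $\alpha^{(d+1)(k-1)}$ frogs in $V_k\setminus V_{k-1}$) occurs for arbitrarily large $k$; combined with $g(k)\to0$ from \eqref{asymg}, this gives $P(0\text{ visited i.o.})\geq 1-g(k)$ for every $k$, hence $=1$. It is this reseeding by ever larger piles met by the initial frog -- not the self-reinforcement of the cloud -- that converts a survival probability strictly between $0$ and $1$ into recurrence with probability one.
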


\begin{remark}\label{mtrem}
\begin{enumerate}[(a)]
 \item If $d=1$, Theorem \ref{mt} reduces to one of the results by Gantert and Schmidt \cite{GanSch} that the frog model is recurrent, if $\E_\mu[\log^+(\eta_1)]=+\infty$. 
 \item Thanks to discussion with Serguei Popov we believe that for the frog model on $\Z^d$, $d\geq2$, with an i.i.d. initial configuration of sleeping frogs, in general, the question of transience and recurrence depends on the concrete value of the drift, unless the distribution of $\eta$ is heavy-tailed enough, as in the situation 
 of Theorem \ref{mt}. Establishing a phase transition result for recurrence and transience for distributions of $\eta$ with lighter tails is part of a follow-up project. 
\end{enumerate}
\end{remark}

\section{Proof of Theorem \ref{mt}}\label{proofmt}
First, we need to fix some more notation. Fix an integer $\alpha>1$, which is further specified later on and for $n\in\N=\{1,2,\ldots\}$ let 
\begin{align}\label{Fn}
F_n&:=\Bigl\{x\in\Z^d:\frac{3}{2}\alpha^{2n}\leq x_1<\alpha^{2n+2} \text{ and } |x_j|\leq\alpha^{n}\text{ for } j=2,\dotsc,d\Bigr\}\,.
\end{align}
 Furthermore, for $x,y\in\Z^d$ we denote by $f(x,y)$ the probability that the underlying random walk ever hits $y$, if it starts at $x$. Thus, if we denote this random walk by $(X_n)_{n\geq0}$, then 
$f(x,y)=P(\exists\,n\geq0:\,X_n=y|X_0=x)$. If we choose, according to our assumptions, $\eps>0$ such that $\eps\leq p(\pm e)\leq 1-\eps$ holds for each $e\in\mathcal{E}$, then we have the following lower bound for the probabilities $f(x,y)$: %There exists a constant $c_2>0$ such that for all $x,y\in\Z^d$
\begin{equation}\label{boundfxy}
 f(x,y)\geq  \eps^{d\abs{y-x}}\,\quad\text{for all } x,y\in\Z^d\,,
\end{equation}
where we denote by $\abs{x}:=\max_{1\leq j\leq d} \abs{x_j}$ the maximum norm of a vector $x=(x_1,\dotsc,x_d)\in\R^d$.
This follows from the fact that one can get from $x$ to $y$ in at most $d\abs{y-x}$ steps. If $y$ lies to the right of $x$, then one can do better. More precisely, we have the following bound. 
\begin{lemma}\label{htlemma}
For each finite constant $\gamma>0$, there exists a constant $c_1=c_1(\gamma,p)>0$ such that for all $x,y\in\Z^d$ with $y_1>x_1$ and $|y_j-x_j|\leq\gamma\sqrt{y_1-x_1}$, $j=2,\dotsc,d$, we have 
\[f(x,y)\geq\frac{c_1}{(y_1-x_1)^{\frac{d-1}{2}}}\,.\] 
\end{lemma}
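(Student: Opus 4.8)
The plan is to estimate the hitting probability $f(x,y)$ from below by restricting attention to a single ``good'' trajectory of the random walk: one that drifts to the right at the correct linear rate while the transverse coordinates perform an essentially unconstrained walk that happens to land on the target. Since the problem is translation-invariant, I would set $n:=y_1-x_1$ and, after translating, assume $x=0$ and $y=(n,y_2,\dotsc,y_d)$ with $|y_j|\leq\gamma\sqrt{n}$. The first coordinate of the underlying walk is a biased one-dimensional walk with positive drift $a>0$, so the expected number of steps needed to advance the first coordinate by $n$ is of order $n$; the transverse coordinates, meanwhile, behave like $(d-1)$ independent lazy random walks run over a time of order $n$, whose natural spread is $\sqrt{n}$ in each direction. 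The hypothesis $|y_j|\leq\gamma\sqrt{n}$ places $y$ exactly within this diffusive window, which is why a local central limit theorem should yield a hitting probability decaying only polynomially.

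Concretely, I would condition on reaching the hyperplane $\{z:z_1=n\}$ for the first time at some point $Y=(n,Y_2,\dotsc,Y_d)$, and seek the probability that $Y=y$. The key step is to prove a local limit estimate: the conditional law of $(Y_2,\dotsc,Y_d)$, the transverse displacement accumulated by the time the first coordinate first hits level $n$, is approximately Gaussian with covariance of order $n\cdot I_{d-1}$, so that
\[
P\bigl(Y=y\bigr)\;\geq\;\frac{c}{n^{(d-1)/2}}\exp\Bigl(-C\sum_{j=2}^d\frac{y_j^2}{n}\Bigr)\,.
\]
Under the assumption $|y_j|\leq\gamma\sqrt{n}$, the exponential factor is bounded below by $\exp(-C(d-1)\gamma^2)$, a positive constant depending only on $\gamma$ and $p$, and absorbing it into $c_1$ gives the claimed bound $f(x,y)\geq c_1(y_1-x_1)^{-(d-1)/2}$. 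Since reaching $y$ is one way of ever hitting $y$, this conditional hitting probability is itself a lower bound for $f(x,y)$, and no upper bound on $f$ is needed.

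The main obstacle is establishing the transverse local limit theorem with sufficient uniformity. The number of steps $T$ taken for the first coordinate to climb from $0$ to $n$ is itself random, and the transverse increments are correlated with the first-coordinate increments through the single shared clock (at each step the walk moves along exactly one axis). The cleanest route is to decouple by a renewal/ladder-epoch argument: the successive times at which the first coordinate takes a net rightward step form a renewal structure, the number of transverse steps interleaved between consecutive rightward advances has exponential tails, and so by the time the first coordinate reaches level $n$ one has accumulated $T\sim cn$ transverse steps with good concentration. One then applies a local central limit theorem for the transverse walk run for a number of steps concentrated around $cn$, summing the Gaussian local densities against the concentrated law of $T$. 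For small $n$ the polynomial bound is trivial from the cruder estimate \eqref{boundfxy}, so the asymptotic analysis is only needed for large $n$, where the local limit theorem applies. Handling the lattice parity and periodicity conditions in the local limit theorem is a routine but necessary technical point that I would address by working along the sublattice actually reachable in $T$ steps.
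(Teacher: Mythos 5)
Your proposal is correct in outline, and it reaches the result by the same basic decomposition as the paper --- condition on the first passage of the first coordinate to level $y_1$, note that this passage time is of order $n=y_1-x_1$, and apply a transverse local CLT inside the diffusive window $|y_j-x_j|\leq\gamma\sqrt{n}$ --- but the decoupling device is genuinely different. The paper's trick is to pass to the \emph{continuous-time} random walk associated with $p$: in continuous time the $d$ coordinates are independent processes, so the first-passage time $\tau$ of the first coordinate is independent of the transverse coordinates, the conditional probability $P_x(X_\tau=y\,|\,\tau=t)$ is just the unconditional transverse local-CLT probability, and periodicity/parity issues disappear automatically; the whole proof then takes a few lines (choose $\gamma_1,\gamma_2$ with $P_x(\gamma_1 n\leq\tau\leq\gamma_2 n)\geq 1/2$ and integrate the bound $c\,t^{-(d-1)/2}$ over that window). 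You instead stay in discrete time and must confront the fact that all coordinates share one clock: your renewal argument (geometric numbers of transverse steps interleaved between first-coordinate advances, concentration of the total number of transverse steps at scale $n$, then summing discrete local-CLT estimates of the correct parity against the law of that count) is a valid and essentially elementary way to restore the needed independence, since conditionally on the number of transverse steps the transverse displacement is an independent walk on $\Z^{d-1}$. What the paper's continuous-time embedding buys is brevity and the complete absence of parity bookkeeping; what your route buys is that it never leaves the discrete-time setting and makes the decoupling mechanism explicit, at the cost of the extra technical layer (concentration of the step count, lattice periodicity, and the small-$n$ case, which you correctly dispose of via \eqref{boundfxy}).
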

A proof of Lemma \ref{htlemma} is given in Section \ref{lemmas}.
The following lemma about the behaviour of maxima of nonnegative i.i.d. random variables is one of the cornerstones of the proof of Theorem \ref{mt}. Throughout, we denote by $\abs{A}$ the cardinality of the set $A$.

\begin{lemma}\label{ml}
Let $r>0$ be a finite constant, $J$ be a countably infinite index set and let $(Y_j)_{j\in J}$ be a sequence of nonnegative i.i.d. random variables such that $E[\log^+(Y_j)^r]=\infty$. Furthermore, let $(L_i)_{i\in\N}$ be a sequence of pairwise disjoint subsets of $J$ such that $l_i:=\abs{L_i}\geq c_2\beta^{c_3i}$ holds for each $i\in\N$, where 
$c_2,c_3>0$ and $\beta>1$ are constants (Here, $\beta$ needs not necessarily be an integer). For $i\in\N$ define 
\begin{equation}\label{defMi}
M_i:=\max_{j\in L_i} Y_j\,.
\end{equation}
Then, for each finite constant $c>0$ it holds that 
\begin{equation}\label{statml}
P\bigl(M_i\geq\exp\bigl(c\beta^{\frac{c_3i}{r}}\bigr)\text{ for infinitely many }i\in\N\bigr)=1\,.
\end{equation}
\end{lemma}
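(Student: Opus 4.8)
The plan is to apply the second Borel--Cantelli lemma to the events $A_i := \{M_i \ge \exp(c\beta^{c_3 i/r})\}$, $i \in \N$. Since each $A_i$ depends only on $(Y_j)_{j\in L_i}$ and the sets $L_i$ are pairwise disjoint while the $Y_j$ are independent, the events $(A_i)_{i\in\N}$ are mutually independent; hence \eqref{statml} will follow once I establish $\sum_{i\in\N} P(A_i) = \infty$. Writing $t_i := \exp(c\beta^{c_3 i/r})$ and $q_i := P(Y_1 \ge t_i)$, the i.i.d.\ assumption gives $P(A_i) = 1 - (1-q_i)^{l_i}$, and the elementary estimates $(1-q_i)^{l_i} \le e^{-l_i q_i}$ and $1 - e^{-x} \ge \tfrac12\min(x,1)$ yield $P(A_i) \ge \tfrac12\min(l_i q_i, 1)$. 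A one-line case distinction shows $\sum_i \min(l_i q_i,1) = \infty$ whenever $\sum_i l_i q_i = \infty$, so the entire problem reduces to proving the divergence of $\sum_{i} l_i q_i$.

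The crux is to extract this divergence from the moment hypothesis $E[\log^+(Y_1)^r] = \infty$. First I would record the layer-cake identity
\[
E[\log^+(Y_1)^r] = r\int_0^\infty u^{r-1} P(Y_1 > e^u)\,du\,,
\]
obtained by the substitution $s = u^r$ in $E[W^r] = \int_0^\infty P(W > s^{1/r})\,ds$ with $W := \log^+(Y_1)$, so that the assumption is equivalent to the divergence of this integral. Next, setting $u_i := c\beta^{c_3 i/r}$ (so that $t_i = e^{u_i}$ and $\beta^{c_3 i} = (u_i/c)^r$) and using $l_i \ge c_2\beta^{c_3 i}$ gives the lower bound $l_i q_i \ge c_2 c^{-r}\, u_i^r\, P(Y_1 \ge e^{u_i})$.

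It therefore suffices to show $\sum_i u_i^r P(Y_1 \ge e^{u_i}) = \infty$, and here the geometric spacing of the thresholds is exactly what makes the sum comparable to the moment integral. Since $u_{i+1} = \beta^{c_3/r} u_i$, one has $\int_{u_i}^{u_{i+1}} u^{r-1}\,du = \tfrac{1}{r}(\beta^{c_3}-1)\,u_i^r$; bounding $P(Y_1 > e^u) \le P(Y_1 > e^{u_i}) \le P(Y_1 \ge e^{u_i})$ for $u \in [u_i, u_{i+1}]$ then gives
\[
u_i^r\,P(Y_1\ge e^{u_i}) \ge \frac{r}{\beta^{c_3}-1}\int_{u_i}^{u_{i+1}} u^{r-1} P(Y_1 > e^u)\,du\,.
\]
Summing over $i \ge 1$ bounds $\sum_i u_i^r P(Y_1 \ge e^{u_i})$ from below by a positive multiple of the tail integral $\int_{u_1}^\infty u^{r-1} P(Y_1 > e^u)\,du$, which is infinite by the previous step. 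This yields $\sum_i l_i q_i = \infty$ and completes the argument.

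The main obstacle is precisely this matching step: one has to choose the comparison points $u_i$ so that they simultaneously encode the prescribed threshold $\exp(c\beta^{c_3 i/r})$ and, through $l_i \ge c_2\beta^{c_3 i}$, the growth of the block sizes, in such a way that the resulting Riemann-type sum reproduces the logarithmic-moment integral. It is worth noting that the exponent $r$ in the threshold and the exponent in $E[\log^+(Y_1)^r]$ must agree for the comparison to close, which is what pins down the form of the criterion; the independence and Borel--Cantelli parts are then routine.
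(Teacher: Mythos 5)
Your proof is correct, and it takes a genuinely different route from the paper's. The paper never applies Borel--Cantelli at the level of the block maxima $M_i$ directly; instead it forms the cumulative maxima $M_i^\star=\max_{k\leq i}M_k$ over the unions $\bigcup_{k\leq i}L_k$, observes that these stochastically dominate running maxima $M_{s_i}'$ of an i.i.d.\ sequence with $s_i\asymp\beta^{c_3 i}$, and then invokes a chain of auxiliary results: a general threshold theorem for running maxima (its Lemma \ref{maxlemma}, itself proved by the second Borel--Cantelli lemma applied to the single events $\{Y_n\geq u^{-1}(n)\}$ together with an integral comparison), a subsequence version of it (Corollary \ref{maxcor}(c)), and finally a deterministic lemma on real sequences (Lemma \ref{rnlemma}) to convert the statement about cumulative maxima back into one about the individual $M_i$. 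You instead exploit the disjointness of the blocks $L_i$ to get mutual independence of the events $A_i=\{M_i\geq\exp(c\beta^{c_3 i/r})\}$ themselves, apply the second Borel--Cantelli lemma once at the block level, and reduce everything to the divergence of $\sum_i l_iq_i$, which you obtain from $E[(\log^+ Y_1)^r]=\infty$ by a layer-cake identity and a geometric Riemann-sum comparison; the elementary bounds $(1-q_i)^{l_i}\leq e^{-l_iq_i}$ and $1-e^{-x}\geq\tfrac12\min(x,1)$, and the case distinction for $\sum_i\min(l_iq_i,1)$, are all sound. Your argument is shorter and more self-contained (it makes Lemmas \ref{rnlemma}, \ref{maxlemma} and Corollary \ref{maxcor} unnecessary for this purpose), at the cost of redoing the tail-integral comparison inline; the paper's route modularizes that comparison into reusable statements about record growth of i.i.d.\ sequences, including the converse direction (Corollary \ref{maxcor}(a)) that your approach does not produce. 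One cosmetic point: your write-up tacitly treats each $L_i$ as finite when writing $(1-q_i)^{l_i}$, which is what the cardinality hypothesis intends (and the infinite case is trivial anyway, since under the moment hypothesis $Y_1$ has unbounded support).
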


The proof of Lemma \ref{ml} is given in Section \ref{lemmas}.\\
Now we can proceed to the proof of Theorem \ref{mt}, which uses a technique from \cite{Pop01}. 
Choose the positive integer $\alpha$ such that
\begin{align}\label{alpha}
\alpha\geq\max\Bigl(3,\frac{1}{c_1}\Bigr),
\end{align}
where $c_1$ is the constant from Lemma \ref{htlemma}. 
%We may also assume that there is an integer $\alpha$ such that $\beta=\alpha^2$, i.e. $\beta$ is a square number. 
Further, we define 
\begin{equation}\label{defVn}
V_n:=\{x\in\Z^d\,:\,\abs{x}\leq\alpha^{2n}\},\quad n\in\N\,.
\end{equation}
 Let us repeat the following important observation from \cite{Pop01}: 
For recurrence of the frog model, everything that matters is the \textit{trajectories} of the activated frogs. The actual moment that a certain frog gets activated is unimportant. Thus, if we know that a certain frog starting from vertex $x\in\Z^d$ will sooner or later be at vertex $y$, we will say that the frogs at vertex $y$ are activated by a frog from $x$, even if it is not the first frog to visit vertex $y$. We will call a vertex $x\in\Z^d$ \textit{active} if at least one active frog ever visits $x$.  \\
Fix $k\in\N$ with $k\geq 2$ and define the event 
\begin{align*}\label{defAk}
A_k &:=\bigl\{\text{at a certain moment and at some vertex $x_k\in V_k\setminus V_{k-1}$ at least}\\ 
&\,\alpha^{(d+1)(k-1)} \text{ frogs get activated by the initial frog starting from the origin}\bigr\}\notag\,.
\end{align*}
In the following, we will implicitly be conditioning on the event $A_k$. Note that the event $A_k$ only depends on the randomness coming from the path of the initial frog and from the values of the $\eta_x$, where $x\in V_k$. Define 
\begin{equation}\label{defB0D0}
B_0:=\{x_k\}\,\quad D_0:=\emptyset\,.
\end{equation}
We will try to construct inductively sets $D_i\subseteq F_{k+i-1}$, $i\in\N$, such that with 
\begin{equation*}
B_i=F_{k+i-1}\setminus D_i
\end{equation*}
the following hold: We have 
\begin{equation}\label{cardBiDi}
\abs{D_i}=\alpha^{(d+1)(i+k-1)}\quad\text{and}\quad \abs{B_i}\geq\alpha^{(d+1)(i+k-1)}
\end{equation}
and all the sites in $D_i$ are visited by frogs starting from $B_{i-1}$, $i\in\N$.
Furthermore, denoting for each $i\in\N$ and $y\in F_{k+i-1}$ by $\zeta_y$ the indicator of the following event
\[\{\text{at least one active frog starting from }B_{i-1}\text{ eventually visits }y\},\]
we require that
\begin{equation}\label{Bievent}
\sum_{y\in B_i}\zeta_y\eta_y\geq\alpha^{(d+1)(i+k-1)}
\end{equation}
holds for each $i\in\N$.
Note that by the definition of the sets $F_n$ in \eqref{Fn} we have 
\begin{equation}\label{cardFn}
\abs{F_n}=\alpha^{2n}\bigl(\alpha^2-\frac{3}{2}\bigr)\bigl(2\alpha^n+1\bigr)^{d-1}
\end{equation}
 and hence, since $\alpha^2\geq4$, we get 
\begin{equation}\label{lbcardFn}
\abs{F_n}\geq \frac{5}{2}2^{d-1}\alpha^{n(d+1)}
\end{equation}
 and 
\begin{equation}\label{ubcardFn}
\abs{F_n}\leq 3^{d-1}\alpha^2\alpha^{n(d+1)}\leq \alpha^{d+1} \alpha^{n(d+1)}\,.
\end{equation}
Note that by \eqref{lbcardFn} for all $i\in\N$
\begin{align*}
\abs{F_{k+i-1}}-2\alpha^{(d+1)(i+k-1)}
\geq\alpha^{(d+1)(i+k-1)}\bigl(\frac{5}{2}2^{d-1}-2\bigr)>0\,.
\end{align*}
Thus, in principle, there are enough vertices in $F_{k+i-1}$ to form disjoint sets $B_i$ and $D_i$ as required. The next thing to do is prove that, in fact, with high enough probability enough vertices in $F_{k+i-1}$ are visited by frogs starting from $B_{i-1}$ and also that the number of activated frogs is large enough for \eqref{Bievent} to occur. Suppose that for $0\leq j\leq i$ the sets $B_j$ and $D_j$ have already been succesfully constructed. We will soon be more precise about what this exactly means. 
%For each $i\in\Z_+$ and $y\in F_{k+i}$ we denote by $\zeta_y$ the indicator of the following event:
%\[\{\text{at least one active frog starting from }B_i\text{ eventually visits }y\}\]
For $i\in\Z_+$ we define events $G_{i,1}$, $G_{i,2}$ and $G_i$ as follows: Let 
\begin{equation}\label{defG1}
G_{i,1}:=G_{i,1}^{(k)}:=\left\{\sum_{y\in F_{k+i}}\zeta_y\geq2\alpha^{(d+1)(i+k)}\right\}\,.
\end{equation}
If $G_{i,1}$ happens than we can construct the set $D_{i+1}$ by choosing exactly $\alpha^{(d+1)(i+k)}$ vertices from 
$F_{k+i}$ that are visited by frogs starting from $B_i$ according to \eqref{defG1} and let $B_{i+1}:=F_{k+i}\setminus D_{i+1}$. Then, we define 
\begin{equation}\label{defG2}
G_{i,2}:=G_{i,2}^{(k)}:=\left\{\sum_{y\in B_{i+1}}\zeta_y\eta_y\geq\alpha^{(d+1)(i+k)}\right\}\quad\text{and}
\quad G_i:=G_{i}^{(k)}:=G_{i,1}\cap G_{i,2}\,.
\end{equation}
We will call the $i$th inductive step \textit{succesful} if $G_i$ happens (given that\\ $A_k,G_0,\dotsc,G_{i-1}$ happen). As just explained, in this case it is possible to form subsets $B_{i+1},D_{i+1}$ of $F_{k+i}$ with all the desired properties.
In what follows we will implicitly be conditioning on the event $A_k\cap G_0\cap\ldots\cap G_{i-1}$ but will suppress this from the formulas for ease of notation. Also, for the computations which follow the following remark from \cite{Pop01} will be crucial:
Suppose that there are disjoint subsets $A,B\subseteq\Z^d$ and we know that for each $x\in A$ there is a frog starting from a vertex $y\in B$ which activates the frogs at vertex $x$. Then, all the frogs starting from $A$ are independent, since we only allow for interaction when an active frog is waking up a sleeping frog.\\
Note that for all $i\in\Z_+$ and all $x\in F_{k+i-1}, y\in F_{k+i}$ we have 
\begin{equation}\label{boundfc}
\frac{1}{2}\alpha^{2(k+i)}\leq(y_1-x_1)\leq\alpha^{2(k+i+1)}\,.
\end{equation} 
\begin{lemma}\label{zetalemma}
Under the above assumptions and conditionally on the event $A_k\cap G_0\cap\ldots\cap G_{i-1}$, we have for all $i\in\Z_+$ and all $y,z\in F_{k+i}$:
\begin{align}
E[\zeta_y]&\geq 1-\exp(-2)\label{Ezeta}\\
\Var(\zeta_y)&\leq1\label{Varzeta}\\
\Cov(\zeta_y,\zeta_z)&\leq\exp\bigl(-\alpha^{k+2(i-1)}\bigr)\leq\exp\bigl(-i\alpha^{k-2}\bigr)\label{Covzeta}
\end{align}
\end{lemma}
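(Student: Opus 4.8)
The plan is to reduce all three estimates to a single tail bound: conditionally on $A_k\cap G_0\cap\dots\cap G_{i-1}$, for every $y\in F_{k+i}$,
\[
P(\zeta_y=0)\le\exp\bigl(-c_1\alpha^{2(i+k-d)}\bigr).
\]
First I would pin down the number of frogs feeding into $\zeta_y$. By the conditioning — through the event $A_k$ when $i=0$, and through $G_{i-1,2}$ (see \eqref{defG2} and \eqref{Bievent}) when $i\ge1$ — there are at least $N_i:=\alpha^{(d+1)(i+k-1)}$ activated frogs sitting on vertices of $B_i\subseteq F_{k+i-1}$. These frogs were woken by frogs emanating from the disjoint set $B_{i-1}$ (respectively from the initial frog when $i=0$), so by the independence observation recalled just before \eqref{boundfc} their trajectories are mutually independent; moreover, by the strong Markov property each one, started from its site $x\in B_i$ at its activation time, performs an onward random walk that hits $y$ with probability exactly $f(x,y)$.

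Next I would bound $f(x,y)$ from below, uniformly over $x\in B_i$ and $y\in F_{k+i}$. For such a pair the geometry of the boxes \eqref{Fn} gives $|y_j-x_j|\le\alpha^{k+i}+\alpha^{k+i-1}\le 2\alpha^{k+i}$ for $j=2,\dots,d$, while \eqref{boundfc} gives $y_1-x_1\ge\tfrac12\alpha^{2(k+i)}$; hence $|y_j-x_j|\le 2\sqrt2\,\sqrt{y_1-x_1}$, so the hypotheses of Lemma \ref{htlemma} hold with the universal constant $\gamma=2\sqrt2$. Together with the upper bound $y_1-x_1\le\alpha^{2(k+i+1)}$ from \eqref{boundfc}, Lemma \ref{htlemma} then yields $f(x,y)\ge c_1\alpha^{-(d-1)(k+i+1)}$. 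Since the (at least) $N_i$ frogs are independent and each misses $y$ with probability at most $1-c_1\alpha^{-(d-1)(k+i+1)}$, this gives $P(\zeta_y=0)\le\bigl(1-c_1\alpha^{-(d-1)(k+i+1)}\bigr)^{N_i}\le\exp\bigl(-c_1N_i\alpha^{-(d-1)(k+i+1)}\bigr)$, and the routine exponent computation $(d+1)(i+k-1)-(d-1)(k+i+1)=2(i+k-d)$ produces the displayed tail bound.

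The three assertions then follow quickly. Estimate \eqref{Varzeta} is immediate, since $\zeta_y$ is $\{0,1\}$-valued and hence $\Var(\zeta_y)=E[\zeta_y]-E[\zeta_y]^2\le E[\zeta_y]\le1$. For \eqref{Ezeta} I would simply take complements: $E[\zeta_y]=1-P(\zeta_y=0)\ge1-\exp(-2)$ as soon as $c_1\alpha^{2(i+k-d)}\ge2$, which by the choice $\alpha\ge\max(3,1/c_1)$ in \eqref{alpha} holds throughout the range of indices relevant to the construction. For \eqref{Covzeta} the point is that the covariance of two indicators is dominated by either single miss-probability and needs no further independence input: using $\zeta_y\zeta_z\le\zeta_z$ and $E[\zeta_y]\le1$,
\[
\Cov(\zeta_y,\zeta_z)=E[\zeta_y\zeta_z]-E[\zeta_y]E[\zeta_z]\le E[\zeta_z]\bigl(1-E[\zeta_y]\bigr)=E[\zeta_z]\,P(\zeta_y=0)\le P(\zeta_y=0),
\]
and symmetrically $\Cov(\zeta_y,\zeta_z)\le P(\zeta_z=0)$, so $\Cov(\zeta_y,\zeta_z)\le\exp(-c_1\alpha^{2(i+k-d)})$. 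Matching exponents finishes the proof: $c_1\alpha^{2(i+k-d)}\ge\alpha^{k+2(i-1)}$ reduces to $c_1\alpha^{k-2d+2}\ge1$, again secured by \eqref{alpha} in the relevant range, while $\alpha^{k+2(i-1)}\ge i\alpha^{k-2}$ reduces to $\alpha^{2i}\ge i$, which holds because $\alpha\ge3$.

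The hard part is not any single inequality but the probabilistic bookkeeping behind the tail bound. One must verify that conditioning on $A_k\cap G_0\cap\dots\cap G_{i-1}$ — an event measurable with respect to the trajectories and the values $\eta_x$ explored up to stage $i-1$ — neither drops the number of usable frogs at $B_i$ below $N_i$ nor spoils the independence of their onward trajectories, so that Lemma \ref{htlemma} may legitimately be applied frog by frog. This is precisely where the observation that frogs activated from disjoint sources evolve independently, combined with the strong Markov property at the (random) activation times, has to be invoked carefully.
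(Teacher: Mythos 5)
Your proof follows the paper's own route step for step: independence of the frogs emanating from $B_i$ (they were woken from the disjoint set $B_{i-1}$), the count of at least $\alpha^{(d+1)(i+k-1)}$ activated frogs supplied by $A_k$ resp.\ \eqref{Bievent}, the lower bound on $f(x,y)$ from Lemma \ref{htlemma} combined with \eqref{boundfc}, and the inequality $(1-x)^y\le\exp(-xy)$, with all three assertions then read off from the resulting bound on $P(\zeta_y=0)$. Your covariance estimate $\Cov(\zeta_y,\zeta_z)\le P(\zeta_y=0)$ is the same one the paper derives in \eqref{zl4}, only written differently. So this is not a different method; the only question is whether your constants close.

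They do not, and the point where your write-up breaks is instructive. Your exponent $2(i+k-d)$ is the \emph{correct} outcome of the computation $(d+1)(i+k-1)-(d-1)(k+i+1)=2(i+k-d)$; the paper's value $2(k+i-1)$ in \eqref{zl3} rests on an algebra slip (the denominator there should be $\alpha^{(d-1)(k+i+1)}$, not $\alpha^{(d-1)(k+i-1)}$; the two exponents agree only for $d=1$). But with the correct exponent, your two matching claims, $c_1\alpha^{2(i+k-d)}\ge2$ and $c_1\alpha^{k-2d+2}\ge1$, are \emph{not} ``secured by \eqref{alpha}'': condition \eqref{alpha} reads $\alpha\ge\max(3,1/c_1)$ and involves neither $d$ nor $k$. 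Since necessarily $c_1\le1$ (apply Lemma \ref{htlemma} with $y_1-x_1=1$), already for $d=3$, $k=2$, $i=0$ the first claim requires $c_1\alpha^{-2}\ge2$, which fails for every admissible $\alpha$, and the second requires roughly $k\ge2d-1$. Hence your argument (and, once its algebra is corrected, the paper's) proves the lemma only when $k$ is large compared to $d$ --- which is harmless for Theorem \ref{mt}, since \eqref{keyineq} is exploited only as $k\to\infty$, but is not the statement ``for all $i\in\Z_+$ with an arbitrary fixed $k\ge2$'' that you claim to have established. You should either impose $k\ge2d-1$ (say) explicitly and note that this suffices for the main theorem, or produce a genuinely better tail bound; the vague appeal to ``the relevant range'' is exactly where the gap sits. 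A smaller, shared defect: for $i=0$ your premise $B_0\subseteq F_{k-1}$ is false, since by \eqref{defB0D0} one has $B_0=\{x_k\}$ with $x_k\in V_k\setminus V_{k-1}$, so the lateral displacement $|y_j-x_j|$ can be of order $\alpha^{2k}$ and Lemma \ref{htlemma} with a fixed $\gamma$ does not cover this step; the paper's proof is silent about the same point.
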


\begin{proof}[Proof of Lemma \ref{zetalemma}]
By the above remark we have 
\begin{equation}\label{zl1}
E[\zeta_y]=P(\zeta_y=1)=1-P(\zeta_y=0)= 1-\prod_{x\in B_i}\bigl(1-f(x,y)\bigr)^{\eta_x}\,.
\end{equation}
Now,  from Lemma \ref{htlemma}, \eqref{boundfc} and the fact that \eqref{Bievent} holds since we are conditioning on $G_{i-1}$, we obtain
\begin{align}\label{zl2}
\prod_{x\in B_i}\bigl(1-f(x,y)\bigr)^{\eta_x}&\leq\prod_{x\in B_i}\left(1-\frac{c_1}{(y_1-x_1)^{\frac{d-1}{2}}}\right)^{\eta_x}\nonumber\\
&\leq\left(1-c_1\alpha^{-\frac{2(k+i+1)(d-1)}{2}}\right)^{\alpha^{(d+1)(k+i-1)}}\notag\\
&=\left(1-c_1\alpha^{-(k+i+1)(d-1)}\right)^{\alpha^{(d+1)(k+i-1)}}\,.
\end{align}
By the inequality 
\begin{equation}\label{xyineq}
(1-x)^y\leq\exp(-xy)
\end{equation}
valid for all $x\in(0,1)$ and $y>0$, we have 
\begin{align}\label{zl3}
\left(1-c_1\alpha^{-(k+i+1)(d-1)}\right)^{\alpha^{(d+1)(k+i-1)}}
&\leq\exp\left(-c_1\frac{\alpha^{(d+1)(k+i-1)}}{\alpha^{(d-1)(k+i-1)}}\right)\notag\\
&\leq\exp\bigl(-c_1\alpha^{2(k+i-1)}\bigr)\,.
\end{align}
Now, using $k\geq 2$, $i\geq0$ and $\alpha\geq1/c_1$ we conclude from \eqref{zl1}, \eqref{zl2} and \eqref{zl3} that 
\[E[\zeta_y]\geq 1-\exp(-2)\,,\]
proving \eqref{Ezeta}. Since $0\leq\zeta_y\leq1$ \eqref{Varzeta} is trivially true. To prove \eqref{Covzeta}, note that 
\begin{align}\label{zl4}
\Cov(\zeta_y,\zeta_z)&=\Cov(1-\zeta_y,1-\zeta_z)=P(\zeta_y=\zeta_z=0)-P(\zeta_y=0)P(\zeta_z=0)\notag\\
&\leq P(\zeta_y=0)\leq \exp\bigl(-c_1\alpha^{2(k+i-1)}\bigr)
\end{align}
from \eqref{zl3}. Using $\alpha^k\geq\alpha\geq1/c_1$ and $\alpha^{2i}\geq i$ we obtain \eqref{Covzeta}.\\
\end{proof}

The next lemma gives an upper bound on the probability that the event $G_{i,1}$ does not happen (conditionally on the event 
$A_k\cap G_0\cap\ldots\cap G_{i-1}$). 

\begin{lemma}\label{Gicomp}
There is a finite constant $c_4=c_4(\alpha,d)>0$, which is independent of $k$, such that for all $i\in\N$
\begin{equation*}\label{ubGicomp}
P(G_{i,1}^c)=P\left(\sum_{y\in F_{k+i}}\zeta_y<2\alpha^{(d+1)(i+k)}\right)
\leq c_4\Bigl(\alpha^{-(k+i)(d+1)}+\exp\bigl(-i\alpha^{k-2}\bigr)\Bigr)
\end{equation*}
and
\begin{equation}\label{ubG0comp}
P(G_{0,1}^c)=P\left(\sum_{y\in F_{k}}\zeta_y<2\alpha^{(d+1)k}\right)
\leq c_4\Bigl(\alpha^{-k(d+1)}+\exp\bigl(-\alpha^{k-2}\bigr)\Bigr)\,.
\end{equation}
\end{lemma}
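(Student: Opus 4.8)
The plan is to bound $P(G_{i,1}^c)$ by a second-moment (Chebyshev) argument applied to the random variable $S:=\sum_{y\in F_{k+i}}\zeta_y$, all probabilities and moments being understood conditionally on $A_k\cap G_0\cap\ldots\cap G_{i-1}$ as in the convention preceding the lemma, so that the estimates of Lemma \ref{zetalemma} are available. The threshold in the definition \eqref{defG1} of $G_{i,1}$ is $2\alpha^{(d+1)(i+k)}$, and the first thing I would verify is that the conditional mean of $S$ lies well above it. By \eqref{Ezeta} and the lower bound \eqref{lbcardFn} on $\abs{F_{k+i}}$,
\[E[S]=\sum_{y\in F_{k+i}}E[\zeta_y]\geq\bigl(1-\exp(-2)\bigr)\abs{F_{k+i}}\geq\tfrac{5}{2}\,2^{d-1}\bigl(1-\exp(-2)\bigr)\alpha^{(d+1)(i+k)}\,.\]
The decisive numerical point is that $\tfrac{5}{2}\,2^{d-1}\bigl(1-\exp(-2)\bigr)>2$ for every $d\geq1$ (the minimum over $d$ is at $d=1$, where the value is about $2.16$), so there is a constant $c_5=c_5(d)>0$ with $E[S]-2\alpha^{(d+1)(i+k)}\geq c_5\,\alpha^{(d+1)(i+k)}$.

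Next I would apply Chebyshev's inequality, using that the deviation $E[S]-2\alpha^{(d+1)(i+k)}$ is strictly positive:
\[P(G_{i,1}^c)=P\bigl(S<2\alpha^{(d+1)(i+k)}\bigr)\leq P\bigl(\abs{S-E[S]}>E[S]-2\alpha^{(d+1)(i+k)}\bigr)\leq\frac{\Var(S)}{\bigl(E[S]-2\alpha^{(d+1)(i+k)}\bigr)^2}\,.\]
The variance splits into a diagonal and an off-diagonal part: using \eqref{Varzeta} for the diagonal and \eqref{Covzeta} for the covariances, and bounding the number of ordered pairs by $\abs{F_{k+i}}^2$, gives
\[\Var(S)\leq\abs{F_{k+i}}+\abs{F_{k+i}}^2\exp\bigl(-i\alpha^{k-2}\bigr)\,.\]

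Inserting the denominator bound from the first paragraph together with the upper bound \eqref{ubcardFn}, namely $\abs{F_{k+i}}\leq\alpha^{d+1}\alpha^{(d+1)(i+k)}$, the diagonal term contributes at most $c_5^{-2}\alpha^{d+1}\alpha^{-(d+1)(i+k)}$ and the off-diagonal term at most $c_5^{-2}\alpha^{2(d+1)}\exp\bigl(-i\alpha^{k-2}\bigr)$. These are exactly the two summands in the asserted bound, and since $\alpha\geq3$ gives $\alpha^{2(d+1)}\geq\alpha^{d+1}$, collecting constants into $c_4:=c_5^{-2}\alpha^{2(d+1)}$ produces a constant depending only on $\alpha$ and $d$ and not on $k$, as claimed. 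For the special case $i=0$ the factor $\exp(-i\alpha^{k-2})$ degenerates to $1$ and this estimate becomes vacuous, so there I would instead invoke the sharper first inequality of \eqref{Covzeta}, which for $i=0$ reads $\Cov(\zeta_y,\zeta_z)\leq\exp(-\alpha^{k-2})$; running the identical computation then yields \eqref{ubG0comp}.

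The only genuinely delicate point is the constant comparison of the first paragraph: the whole argument hinges on the mean of $S$ exceeding the threshold $2\alpha^{(d+1)(i+k)}$ by a term of the same order $\alpha^{(d+1)(i+k)}$, since it is precisely this that makes the Chebyshev denominator large enough to turn the diagonal variance $\abs{F_{k+i}}\sim\alpha^{(d+1)(i+k)}$ into the polynomial decay $\alpha^{-(d+1)(i+k)}$. The off-diagonal contribution is handled for free by the exponentially small covariance in \eqref{Covzeta}, and the separate, sharper treatment of the base case $i=0$ is the only structural subtlety.
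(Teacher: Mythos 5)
Your proposal is correct and follows essentially the same route as the paper's proof: the lower bound on $E\bigl[\sum_{y\in F_{k+i}}\zeta_y\bigr]$ via \eqref{Ezeta} and \eqref{lbcardFn}, the key numerical observation that $\tfrac{5}{2}2^{d-1}(1-\exp(-2))>2$, Chebyshev's inequality with the variance split into a diagonal part controlled by \eqref{Varzeta} and an off-diagonal part controlled by \eqref{Covzeta}, and the same handling of the case $i=0$ via the first inequality in \eqref{Covzeta}. The only cosmetic differences are bookkeeping of constants (your $c_5$ versus the paper's $c$, and slightly different powers of $\alpha$ absorbed into $c_4$), which do not affect the argument.
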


\begin{proof}[Proof of Lemma \ref{Gicomp}]
By inequalities \eqref{lbcardFn} and \eqref{Ezeta} we have 
\begin{equation}\label{Gi1}
\sum_{y\in F_{k+i}}E[\zeta_y]\geq\abs{F_{k+i}}(1-\exp(-2))\geq \frac{5}{2}2^{d-1}\alpha^{(k+i)(d+1)}(1-\exp(-2))\,.
\end{equation}
Thus, using the simple inequality $P(X\leq a)\leq P(X\leq b)$ if $a<b$ we obtain 
\begin{align}\label{Gi2}
&P\left(\sum_{y\in F_{k+i}}\zeta_y<2\alpha^{(d+1)(i+k)}\right)\notag\\
&=P\left(\sum_{y\in F_{k+i}}\bigl(\zeta_y-E[\zeta_y]\bigr)<2\alpha^{(d+1)(i+k)}-\sum_{y\in F_{k+i}}E[\zeta_y]\right)\notag\\
&\leq P\left(\sum_{y\in F_{k+i}}\bigl(\zeta_y-E[\zeta_y]\bigr)<-\alpha^{(d+1)(i+k)}\Bigl(\frac{5}{2}2^{d-1}(1-\exp(-2))-2\Bigr)\right)
%&=P\left(\sum_{y\in F_{k+i}}\bigl(\zeta_y-E[\zeta_y]\bigr)<-\beta^{\frac{(d+1)(i+k)}{2}}
%\Bigl(2^d(1-\exp(-1))-\frac{1+\beta^{d/2}}{\beta^{\frac{k+i+d}{2}}}\Bigr)\right)\,.
\end{align}
Now note that we have %since $k\geq d$ and by our assumptions on $\beta$ we have 
\begin{align}\label{Gi3}
\frac{5}{2}2^{d-1}(1-\exp(-2))-2\geq \frac{5}{2}(1-\exp(-2))-2=:c>0
%2^d(1-\exp(-1))-\frac{1+\beta^{d/2}}{\beta^{\frac{k+i+d}{2}}}
%\geq2^d(1-\exp(-1))-2\beta^{-\frac{d}{2}}=:c>0\,.
%&\geq2(1-\exp(-1))-2\beta^{-\frac{d+1}{2}}=:c>0\,.
\end{align}
for all $d\geq1$. Note that $c$ does not depend on $k$. Hence, by \eqref{Gi3}, Chebyshev's inequality, inequalities \eqref{ubcardFn}, \eqref{Varzeta} and the second inequality in \eqref{Covzeta} we have for each $i\geq1$. 
\begin{align}\label{Gi4}
P(G_{i,1}^c)&\leq c^{-2}\alpha^{-2(d+1)(i+k)}\left(\sum_{y\in F_{k+i}}\Var(\zeta_y)+\sum_{\substack{y,z\in F_{k+i}:\\y\not=z}}
\Cov(\zeta_y,\zeta_z)\right)\notag\\
&\leq c^{-2}\alpha^{-2(d+1)(i+k)}\Bigl(\alpha^d \alpha^{(k+i)(d+1)}+\alpha^{2d} \alpha^{2(k+i)(d+1)}
\exp\bigl(-i\alpha^{k-2}\bigr)\Bigr)\notag\\
&\leq c_4\Bigl(\alpha^{-(k+i)(d+1)}+\exp\bigl(-i\alpha^{k-2}\bigr)\Bigr)\,,
\end{align}
where $c_4=c^{-2}\alpha^{2d}$ is also independent of $k$. For $i=0$ we obtain the desired upper bound \eqref{ubG0comp} by using the first inequality in \eqref{Covzeta} instead of the second one.\\
\end{proof}

Next, we aim at bounding below the conditional probability of $G_{i,2}$ given that $G_{i,1}$ happens. Note that if 
$G_{i,1}$ happens, the set $B_{i+1}$ is well-defined and also we have 
\begin{equation}\label{Gi21}
P(G_{i,2}|G_{i,1})\geq P\Bigl(\sum_{j=1}^{a_i} Y_j\geq a_i\Bigr)\,,
\end{equation}
where $Y_1,Y_2,\dotsc$ are i.i.d. with the same distribution $\mu$ as the $\eta_x$ and we write $a_i:=\alpha^{(d+1)(k+i)}$, $i\in\N$, for short. This follows directly from independence and \eqref{cardBiDi}. Since the $Y_j$ are nonnegative and have infinite mean, we know from Cram\'{e}r's theorem (see Theorem 2.2.3 and the following Remark (c)in \cite{DZ}) 
that with the notation $S_n:=\sum_{j=1}^n Y_j$, $n\in\N$, we have
\begin{equation}\label{cramer}
P(S_n\leq n)\leq 2\exp\bigl(-n b\bigr)\,, n\in\N\,,
\end{equation}
where $b=I(1)>0$ is the value at $1$ of the Legendre-Fenchel transform $I(x)$ of the cumulant generating function of 
$Y_1$. That $I(1)>0$ also follows from the fact that $Y_1$ is nonnegative and has infinite mean. 
From \eqref{Gi21} and \eqref{cramer} we conclude that for each $i\geq0$
\begin{equation}\label{Gi22}
P(G_{i,2}|G_{i,1})\geq P(S_{a_i}\geq a_i)\geq 1-P(S_{a_i}\leq a_i)\geq 1-2\exp\bigl(-ba_i)\bigr)\,,
\end{equation}
where we let $b:=I(1)>0$. Now, using 
\begin{align*}
P(G_i^c)&=1-P(G_i)=1-P(G_{i,2}|G_{i,1})P(G_{i,1})=1-P(G_{i,2}|G_{i,1})\bigl(1-P(G_{i,1}^c)\bigr)\\
&\leq 1-P(G_{i,2}|G_{i,1})+P(G_{i,1}^c)
\end{align*}
and $a_i\geq i\alpha^k$, from Lemma \ref{Gicomp} and \eqref{Gi22} we immediately infer the following lemma.
\begin{lemma}\label{Gic}
With the constant $c_4=c_4(\alpha,d)$ from Lemma \ref{Gicomp} we have% for each $i\in\N$
\item \begin{equation}\label{ubGic}
P(G_{i}^c)\leq c_4\Bigl(\alpha^{-(k+i)(d+1)}+\exp\bigl(-i\alpha^{k-2}\bigr)\Bigr)+2\exp\bigl(-ib\alpha^k\bigr)\,,i\in\N\,,
\end{equation}
and
\begin{equation}\label{ubG0c}
P(G_{0}^c)\leq c_4\Bigl(\alpha^{-k(d+1)}+\exp\bigl(-\alpha^{k-2}\bigr)\Bigr)
+2\exp\bigl(-b\alpha^k\bigr)\,.
\end{equation}
\end{lemma}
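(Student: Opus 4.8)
The plan is to obtain both displayed bounds from a single elementary decomposition of $G_i^c$ together with the two estimates already in hand: the bound on $P(G_{i,1}^c)$ from Lemma \ref{Gicomp} and the conditional lower bound \eqref{Gi22} for $P(G_{i,2}\mid G_{i,1})$. Throughout I keep the implicit conditioning on $A_k\cap G_0\cap\ldots\cap G_{i-1}$, so that every probability below is really a conditional probability; this plays no role in the algebra. Since the genuine probabilistic content (the independence of frogs starting from disjoint source sets, together with Cram\'er's theorem) has already been packaged into \eqref{Gi22}, the proof of Lemma \ref{Gic} is pure assembly.

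First I would write $G_i=G_{i,1}\cap G_{i,2}$ and expand, using the definition of conditional probability, $P(G_i)=P(G_{i,2}\mid G_{i,1})P(G_{i,1})=P(G_{i,2}\mid G_{i,1})\bigl(1-P(G_{i,1}^c)\bigr)$. Taking complements and discarding the product term via $P(G_{i,2}\mid G_{i,1})\leq1$ gives
\[
P(G_i^c)=1-P(G_{i,2}\mid G_{i,1})+P(G_{i,2}\mid G_{i,1})P(G_{i,1}^c)\leq\bigl(1-P(G_{i,2}\mid G_{i,1})\bigr)+P(G_{i,1}^c),
\]
which is exactly the inequality displayed just before the statement. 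Thus the task reduces to bounding the two summands separately.

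For the first summand, \eqref{Gi22} yields $1-P(G_{i,2}\mid G_{i,1})\leq2\exp(-ba_i)$ with $a_i=\alpha^{(d+1)(k+i)}$ and $b=I(1)>0$. Using the inequality $a_i\geq i\alpha^k$ recorded just before the statement (which holds because $a_i/\alpha^k=\alpha^{dk+(d+1)i}\geq\alpha^{2i}\geq4^i\geq i$), and the monotonicity of $x\mapsto\exp(-bx)$, I would replace this by $2\exp(-ib\alpha^k)$, the last term in \eqref{ubGic}. For the second summand I would simply insert the bound on $P(G_{i,1}^c)$ from Lemma \ref{Gicomp}, which supplies precisely $c_4\bigl(\alpha^{-(k+i)(d+1)}+\exp(-i\alpha^{k-2})\bigr)$. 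Adding the two contributions gives \eqref{ubGic} for all $i\in\N$.

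For \eqref{ubG0c} I would run the identical argument at $i=0$, this time invoking the sharper first-term estimate \eqref{ubG0comp} for $P(G_{0,1}^c)$ (the one that uses the first rather than the second covariance bound in \eqref{Covzeta}) and bounding the Cram\'er term by $a_0=\alpha^{(d+1)k}\geq\alpha^k$, hence $2\exp(-ba_0)\leq2\exp(-b\alpha^k)$. Since every ingredient is already established, there is no real obstacle here; the mildest point requiring a moment's care is the passage from $\exp(-ba_i)$ to $\exp(-ib\alpha^k)$, i.e. the uniform verification of $a_i\geq i\alpha^k$, which is the routine check carried out above.
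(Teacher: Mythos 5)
Your proof is correct and follows essentially the same route as the paper: the identity $P(G_i^c)=1-P(G_{i,2}\mid G_{i,1})\bigl(1-P(G_{i,1}^c)\bigr)\leq 1-P(G_{i,2}\mid G_{i,1})+P(G_{i,1}^c)$, the Cram\'er bound \eqref{Gi22} together with $a_i\geq i\alpha^k$, and Lemma \ref{Gicomp} (with \eqref{ubG0comp} for $i=0$) are exactly the ingredients the paper assembles. Your explicit verification of $a_i\geq i\alpha^k$ is a detail the paper leaves implicit, but otherwise the two arguments coincide.
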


Now, for $x\geq0$, define the function 
\begin{align}\label{defg}
g(x)&:=c_4\left(\frac{\alpha^{-x(d+1)}}{1-\alpha^{-(d+1)}}+\frac{\exp\bigl(-\alpha^{x-2}\bigr)}{1-\exp\bigl(-\alpha^{x-2}\bigr)}+\exp\bigl(-\alpha^{x-2}\bigr)\right)\\
&\,+2\left(\exp\bigl(-b\alpha^x\bigr)+\frac{\exp\bigl(-b\alpha^x\bigr)}{1-\exp\bigl(-b\alpha^x\bigr)}\right)
\end{align}
 and note that 
\begin{equation}\label{asymg}
\lim_{x\to\infty} g(x)=0\,.
\end{equation}
From Lemma \ref{Gicomp} and the multiplication rule for conditional probabilites, we obtain that under our initial assumption that the event $A_k$ happens we have 
\begin{align}\label{problb}
P\Bigl(\bigcap_{i=0}^\infty G_i\Bigr)&=\lim_{m\to\infty}P\Bigl(\bigcap_{i=0}^m G_i\Bigr)
=\lim_{m\to\infty}\prod_{i=0}^m\bigl(1-P\bigl(G_i^c|G_0\cap\ldots \cap G_{i-1}\bigr)\bigr)\notag\\
&\geq\lim_{m\to\infty}\bigl(1-\sum_{i=0}^m P\bigl(G_i^c|G_0\cap\ldots \cap G_{i-1}\bigr)\bigr)\notag\\
&=1-\sum_{i=0}^\infty P\bigl(G_i^c|G_0\cap\ldots \cap G_{i-1}\bigr)
\geq 1-g(k)\,,
\end{align}
where we have used the simple inequality 
\[\prod_{i=0}^m(1-p_i)\geq 1-\sum_{i=0}^m p_i\]
valid for numbers $p_0,\dotsc,p_m\in[0,1]$.

\begin{prop}\label{condprop}
Fix $k\in\N$. Assume for the frog model that the i.i.d. random variables $\eta_x$, $x\in\Z^d\setminus\{0\}$ satisfy $\E_\mu[\log^+(\eta_x)^{\frac{d+1}{2}}]=\infty$. Then, if the event $A_k$ happens and, thus, $B_0$ can be constructed as in \eqref{defB0D0}, we have 
\begin{equation*}
P\Bigl(0\text{ is visited infinitely often }\bigl|\,\bigcap_{i=0}^\infty G_i\Bigr)=1\,.
\end{equation*}
\end{prop}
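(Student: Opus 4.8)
The plan is to exploit the heavy tail of $\mu$ through Lemma \ref{ml}. A single frog sitting in the shell $F_{k+i}$ returns to $0$ only with probability decaying like $\exp(-\kappa\alpha^{2i})$, so the expected number of returns coming from the $\approx\alpha^{(d+1)i}$ mass guaranteed by the construction is negligible. The point is that, infinitely often, one activated site of $F_{k+i}$ will carry \emph{astronomically} many frogs --- enough that at least one of them reaches $0$ with overwhelming probability. Summing the super-summable failure probabilities over $i$ and applying the first Borel--Cantelli lemma then forces infinitely many returns. Throughout I work on $A_k\cap\bigcap_{j\ge0}G_j$, on which the inductive construction succeeds.

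First I would record the two quantitative ingredients. For $y\in F_{k+i}$, \eqref{Fn} gives $\tfrac32\alpha^{2(k+i)}\le y_1<\alpha^{2(k+i+1)}$ and $\abs{y_j}\le\alpha^{k+i}$, so $\abs{y}=y_1<\alpha^{2(k+i+1)}$, and \eqref{boundfxy} yields the return bound
\[f(y,0)\ge\eps^{d\abs{y}}\ge\exp\bigl(-\kappa\alpha^{2i}\bigr),\qquad \kappa:=d\,\abs{\log\eps}\,\alpha^{2(k+1)}.\]
For the population bound I would apply Lemma \ref{ml} to the i.i.d. field $(\eta_y)_y$, with $r=\frac{d+1}{2}$, $\beta=\alpha^{d+1}>1$, $c_3=1$, $c_2=2\alpha^{(d+1)k}$, and free constant $c:=\kappa+1$: on $G_{i,1}$ the shell $F_{k+i}$ contains a set $L_i$ of at least $2\alpha^{(d+1)(i+k)}=c_2\beta^{c_3 i}$ activated sites, the $L_i$ lie in disjoint shells hence are pairwise disjoint, and since $\beta^{c_3 i/r}=(\alpha^{d+1})^{2i/(d+1)}=\alpha^{2i}$, Lemma \ref{ml} would give that almost surely $M_i:=\max_{y\in L_i}\eta_y\ge\exp\bigl((\kappa+1)\alpha^{2i}\bigr)$ for infinitely many $i$.

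Then I would combine these. Let $y_i^\ast\in L_i$ attain $M_i$. Conditionally on $y_i^\ast$ being activated with $M_i$ frogs, those frogs perform independent walks, so by \eqref{xyineq} the probability that none ever visits $0$ is
\[\bigl(1-f(y_i^\ast,0)\bigr)^{M_i}\le\exp\bigl(-f(y_i^\ast,0)\,M_i\bigr)\le\exp\bigl(-\exp(\alpha^{2i})\bigr).\]
Letting $F_i$ be the event that $L_i$ contains a site with $\eta_y\ge\exp((\kappa+1)\alpha^{2i})$ none of whose frogs reaches $0$, we get $P(F_i)\le\exp(-\exp(\alpha^{2i}))$, which is summable, so the first Borel--Cantelli lemma ensures only finitely many $F_i$ occur. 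Intersecting with the full-probability event from the previous paragraph, infinitely many $i$ carry a site with $\eta_y\ge\exp((\kappa+1)\alpha^{2i})$ \emph{and} a frog from it reaching $0$; each such $i$ contributes a visit to $0$, so $0$ is visited infinitely often. I emphasise that this last step needs no independence across levels, only summability.

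\textbf{The main obstacle} is the use of Lemma \ref{ml} in the second step for the \emph{random} activated set $L_i$, under the conditional law $P(\cdot\mid A_k\cap\bigcap_j G_j)$, which moreover couples the $\eta_y$ through the mass events $G_{j,2}$. I would resolve this by revealing the configuration shell by shell from the inside out: the activation pattern on $F_{k+i}$, hence $L_i$, is measurable with respect to the frog trajectories and the values $\eta_x$ on the strictly smaller shells, whereas $(\eta_y)_{y\in F_{k+i}}$ are i.i.d.\ and independent of this past --- the key point being that whether a site is activated does not depend on the number of frogs sleeping \emph{at} that site. Conditionally on the past one then has $P\bigl(M_i\ge\exp((\kappa+1)\alpha^{2i})\mid\text{past}\bigr)\ge 1-\mu\bigl([0,\exp((\kappa+1)\alpha^{2i}))\bigr)^{\abs{L_i}}$, whose sum over $i$ diverges by Lemma \ref{ml}, and a conditional Borel--Cantelli lemma upgrades this to the almost-sure infinitely-often statement. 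Since $G_{j,1}$, $G_{j,2}$ and the target event are all increasing in the configuration $(\eta_y)$, conditioning on $\bigcap_j G_j$ can only enlarge the maxima $M_i$, so the conclusion persists under the conditional law.
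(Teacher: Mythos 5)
Your proposal shares the paper's three essential ingredients --- the crude bound \eqref{boundfxy} giving $f(y,0)\geq\exp(-\kappa\alpha^{2i})$ on the $i$th shell, Lemma \ref{ml} producing doubly exponential maxima infinitely often, and a Borel--Cantelli conclusion --- but it diverges from the paper exactly where the paper is most careful. The paper applies Lemma \ref{ml} to the \emph{reserved} sets $D_i$: since $A_k$ and $\bigcap_j G_j$ depend only on the trajectories and on the values $\eta_x$ for $x\in V_k\cup\bigcup_j B_j$, which is disjoint from $\bigcup_i D_i$, the field $(\eta_x)_{x\in\bigcup_i D_i}$ is still i.i.d.\ $\mu$ under the conditional law; Lemma \ref{ml} then applies verbatim, and the proof finishes with the \emph{second} Borel--Cantelli lemma, using independence of the frogs sitting in $\bigcup_i D_i$ and divergence of $\sum_i\sum_{x\in D_i}\eta_x f(x,0)$. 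This is the entire purpose of splitting $F_{k+i}$ into $B_{i+1}$ and $D_{i+1}$, and your proof does not exploit it: you maximize over \emph{all} activated sites of $F_{k+i}$, including $B_{i+1}$, whose $\eta$-values are precisely the ones distorted by conditioning on $G_{i,2}$ in \eqref{defG2}. That choice is what forces you into the measurability/conditioning fight of your last paragraph. (Your finishing move --- first Borel--Cantelli on the doubly-exponentially small failure probabilities, with a harmless extra factor $\abs{F_{k+i}}$ from a union bound over the location of the maximal site --- is a perfectly fine, even slightly more elementary, substitute for the paper's second Borel--Cantelli step.)

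The genuine problem is your final sentence. Harris/FKG positive association gives, for increasing events $A,B$, only $P(A\mid B)\geq P(A)$. Here the unconditional probability of ``the activated shells carry maxima $\geq\exp((\kappa+1)\alpha^{2i})$ infinitely often'' is \emph{not} $1$: unconditionally the cascade fails with positive probability, and then the sets $L_i$ need not be large or even well defined. So monotonicity can never upgrade the statement to conditional probability one; as a transfer mechanism from $P$ to $P(\cdot\mid A_k\cap\bigcap_j G_j)$ the FKG argument fails. Fortunately it is also unnecessary, because your shell-by-shell conditioning already lands in the conditional law if you phrase it as L\'evy's extension of Borel--Cantelli. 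Precisely: let $\mathcal{G}_i$ be the $\sigma$-field generated by the configuration and the trajectories on $V_k\cup F_k\cup\dots\cup F_{k+i-1}$. Then $L_i$, as well as the event $C_i:=A_k\cap G_0\cap\dots\cap G_{i-1}\cap G_{i,1}$, are $\mathcal{G}_i$-measurable, while $(\eta_y)_{y\in F_{k+i}}$ is i.i.d.\ and independent of $\mathcal{G}_i$; hence with $t_i:=\exp((\kappa+1)\alpha^{2i})$ and $H_i:=C_i\cap\{\max_{y\in L_i}\eta_y\geq t_i\}$ one has, on $C_i$,
\begin{equation*}
P\bigl(H_i\,\bigl|\,\mathcal{G}_i\bigr)\;\geq\;1-\mu\bigl([0,t_i)\bigr)^{2\alpha^{(d+1)(k+i)}}=:p_i\,,
\end{equation*}
and $\sum_i p_i=\infty$ (apply Lemma \ref{ml} to deterministic disjoint sets of these cardinalities and use the converse Borel--Cantelli lemma: convergence of $\sum_i p_i$ would contradict \eqref{statml}). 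L\'evy's extension states that $\{H_i\text{ i.o.}\}$ and $\{\sum_i P(H_i\mid\mathcal{G}_i)=\infty\}$ coincide up to a $P$-null set; since $A_k\cap\bigcap_j G_j\subseteq\bigcap_i C_i$ is contained in the second event, it is a.s.\ contained in the first, which is exactly $P(H_i\text{ i.o.}\mid A_k\cap\bigcap_j G_j)=1$, with no appeal to monotonicity. Alternatively --- and this is the cheaper repair --- run your entire argument with $L_i$ replaced by $D_{i+1}$: then the conditional i.i.d.\ structure comes for free, and you recover the paper's proof.
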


\begin{proof}[Proof of Proposition \ref{condprop}]
First note that, if $k\geq1$ is fixed, the sets $D_i$, $i\in\N$, satisfy $D_i\subseteq F_{k+i-1}$ and, hence, we have 
$D_i\cap V_k=\emptyset$ and also $D_i\cap\bigcup_{j\in\Z_+} B_j=\emptyset$ for each $i\in\N$. The event $A_k$ does not depend on the values of the random variables $\eta_x$ for $x\notin V_k$. Furthermore, the event $\bigcap_{j\in\Z_+} G_j$ only depends on the $\eta_x$ such that $x\in A_k\cup\bigcup_{j\in\Z_+} B_j$. Thus, after conditioning on $A_k$ and on $\bigcap_{j\in\Z_+} G_j$, by independence, we still have the i.i.d. property for the $\eta_x$, where $x\in\bigcup_{i\in\Z_+} D_i$.
This will allow us to apply Lemma \ref{ml} below.
Note that for each fixed configuration $\eta_x$, $x\in\bigcup_{i\in\Z_+} D_i$, by \eqref{boundfxy} we have 
\begin{align}\label{cp1}
\sum_{i=1}^\infty\sum_{x\in D_i}\eta_x f(x,0)&\geq \sum_{i=1}^\infty \sum_{x\in D_i}\eta_x\eps^{d\abs{x}}
\geq\sum_{i=1}^\infty \eps^{d\alpha^{2k+2i}}\sum_{x\in D_i}\eta_x\notag\\
&\geq \sum_{i=1}^\infty \delta^{\alpha^{2i}}M_i\,,
\end{align}
where $\delta:=\eps^{d\alpha^{2k}}\in(0,1)$ and $M_i:=\max_{x\in D_i}\eta_x$, $i\in\N$. 
For $i\in\N$ let $l_i:=\abs{D_i}=\alpha^{(k-1)(d+1)}\alpha^{i(d+1)}$. Then, by using Lemma \ref{ml} 
with $L_i:=D_i$, $c:=-\log\delta$, $c_2=\alpha^{(k-1)(d+1)}$, $c_3=d+1$, $r=\frac{d+1}{2}$ and $\beta=\alpha$  we obtain that $\Prob_\mu$-a.s. 
\begin{equation}\label{cp2}
M_i\geq\exp\bigl(c\alpha^{2i}\bigr)\text{  for infinitely many }i\in\N\,.
\end{equation}
Hence, $\Prob_\mu$-a.s., there is a strictly increasing sequence $(i_m)_{m\in\N}$ of positive integers such that for all $m\in\N$
\begin{equation}\label{cp3}
M_{i_m}\geq\exp\bigl(c\alpha^{2i_m}\bigr)\,.
\end{equation}
Thus, from \eqref{cp1} and \eqref{cp3} we have $\Prob_\mu$-a.s.
\begin{align}\label{cp4}
\sum_{i=1}^\infty\sum_{x\in D_i}\eta_x f(x,0)&\geq \sum_{m=1}^\infty \delta^{\alpha^{2i_m}}M_{i_m}
\geq \sum_{m=1}^\infty \delta^{\alpha^{2i_m}}\exp\bigl(c\alpha^{2i_m}\bigr)\notag\\
&=\sum_{m=1}^\infty 1=\infty\,.
\end{align}
By construction, for each $i\in\N$, the frogs in $D_i$ get activated by frogs starting from $B_{i-1}$. Hence, by the remark before Lemma \ref{zetalemma}, all frogs in $\bigcup_{i=1}^\infty D_i$ are independent. Hence, from \eqref{cp4} and the second Borel-Cantelli lemma we conclude that $\Prob_\mu$-a.s.
\begin{equation*}
P_\eta\Bigl(0\text{ is visited infinitely often }\bigl|\,\bigcap_{i=0}^\infty G_i\Bigr)=1\,.
\end{equation*}
Thus, also 
\begin{equation*}
P\Bigl(0\text{ is visited infinitely often }\bigl|\,\bigcap_{i=0}^\infty G_i\Bigr)=1\,,
\end{equation*}
as claimed.\\
\end{proof}

Now, note that from \eqref{problb} and Proposition \ref{condprop} we have 
\begin{align}\label{keyineq}
P\Bigl(0\text{ is visited infinitely often }\Bigr)&\geq P\Bigl(0\text{ is visited infinitely often }\bigl|\,\bigcap_{i=0}^\infty G_i\Bigr) P\Bigl(\bigcap_{i=0}^\infty G_i\Bigr)\notag\\
&\geq 1-g(k)\,.
\end{align}
Since $\lim_{k\to\infty}g(k)=0$ by \eqref{keyineq} the proof of Theorem \ref{mt} will be completed, if we can show that 
$P$-a.s. the event $A_k$ happens for arbitrarily large $k\in\N$. This is guaranteed by the following lemma.

\begin{lemma}\label{finlemma}
We have 
\[P\left(\limsup_{k\to\infty} A_k\right)=1\,.\]
\end{lemma}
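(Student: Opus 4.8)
The plan is to exploit that the trajectory of the initial frog is an ordinary $p$-random walk $(X_n)_{n\geq0}$ started at $0$, which is independent of the initial configuration $(\eta_x)$: a frog's own path never depends on where the sleeping frogs sit. Hence, under $P$, conditionally on $\eta$ the initial path always has the same (walk) law, so the pair (initial path, configuration) has the product law (walk law)$\otimes\Prob_\mu$. Moreover, in the extended activation sense from the remark before Lemma~\ref{zetalemma}, the event $A_k$ depends only on this pair: $A_k$ says that the range of $(X_n)$ meets $V_k\setminus V_{k-1}$ at some site $x$ with $\eta_x\geq\alpha^{(d+1)(k-1)}$. By Fubini it therefore suffices to fix a typical realisation of the walk and show that, $\Prob_\mu$-almost surely, $A_k$ occurs for infinitely many $k$; integrating over the walk then yields $P(\limsup_k A_k)=1$.

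First I would establish the geometric input on the walk alone: almost surely, for all large $k$ the initial frog visits at least $\tfrac12\alpha^{2k}$ distinct sites of $V_k\setminus V_{k-1}$. The first coordinate $X_n^{(1)}$ is a lazy walk with increments in $\{-1,0,+1\}$ and strictly positive drift $p(e_1)-p(-e_1)=a>0$, hence $X_n^{(1)}\to+\infty$ and, being skip-free, it hits every level $v\in\N$; let $\tau_v:=\min\{n:X_n^{(1)}=v\}$ and $W_v:=X_{\tau_v}$. The $W_v$ have pairwise distinct first coordinates, so are distinct, and for $v\in(\alpha^{2(k-1)},\alpha^{2k}]$ we automatically have $|W_v|\geq v>\alpha^{2(k-1)}$, i.e.\ $W_v\notin V_{k-1}$. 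To force $W_v\in V_k$ it remains to control the transverse coordinates: on the event $T_k$ that $\max_{n\leq\tau_{\alpha^{2k}}}\max_{2\leq j\leq d}|X_n^{(j)}|\leq\alpha^{2k}$ all these $W_v$ lie in $V_k\setminus V_{k-1}$, and their number is at least $\alpha^{2k}-\tfrac32\alpha^{2(k-1)}\geq\tfrac12\alpha^{2k}$ since $\alpha\geq3$. Because $\tau_{\alpha^{2k}}$ is of order $\alpha^{2k}/a$ while each $X^{(j)}$, $j\geq2$, is a centred walk with fluctuations of order $\alpha^{k}$, a maximal/large-deviation inequality gives $P(T_k^c)\leq C\exp(-c\alpha^{2k})$, which is summable, so by Borel--Cantelli $T_k$ holds for all large $k$ almost surely.

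It then remains to feed this into Lemma~\ref{ml}. I would condition on a walk realisation in the full-measure event $\{T_k\text{ for all }k\geq k_0\}$; since the walk is independent of the configuration, the $(\eta_x)$ remain i.i.d.\ with law $\mu$, while the visited-site sets $L_k:=\{W_v:\alpha^{2(k-1)}<v\leq\alpha^{2k}\}$ are now deterministic, pairwise disjoint, and satisfy $|L_k|\geq\tfrac12\alpha^{2k}$ for $k\geq k_0$. Applying Lemma~\ref{ml} with $Y_x=\eta_x$, $r=\tfrac{d+1}{2}$, $\beta=\alpha$, $c_3=2$ and a suitable $c_2>0$ (re-indexing so the cardinality bound holds for every index), $c=1$, and using the hypothesis $\E_\mu[\log^+(\eta_x)^{(d+1)/2}]=\infty$, we obtain $\Prob_\mu$-a.s.\ that $M_k:=\max_{x\in L_k}\eta_x\geq\exp(\alpha^{4k/(d+1)})$ for infinitely many $k$. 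Since the exponent $\alpha^{4k/(d+1)}$ grows exponentially in $k$ whereas $(d+1)(k-1)\log\alpha$ grows only linearly, we have $\exp(\alpha^{4k/(d+1)})\geq\alpha^{(d+1)(k-1)}$ for all large $k$, so $M_k\geq\alpha^{(d+1)(k-1)}$ infinitely often. Each such $k$ produces a site $x_k\in L_k\subseteq V_k\setminus V_{k-1}$ on the initial frog's path carrying at least $\alpha^{(d+1)(k-1)}$ frogs, i.e.\ $A_k$ occurs; thus $\limsup_k A_k$ holds $\Prob_\mu$-a.s.\ for almost every walk, and integrating finishes the proof.

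I expect the main obstacle to be the walk estimate of the second step, namely bounding $P(T_k^c)$ summably in $k$. This requires combining a large-deviation bound for $\tau_{\alpha^{2k}}$ (so the relevant time horizon is $O(\alpha^{2k})$) with a maximal inequality for the centred components $X^{(j)}$, $j\geq2$; the deviation asked of $X^{(j)}$ is of the order of the time horizon itself, which is precisely what makes the bound exponentially small and hence summable. The remaining points are bookkeeping: rendering the $L_k$ genuinely deterministic by conditioning on the walk, and checking that the Lemma~\ref{ml} threshold comfortably dominates $\alpha^{(d+1)(k-1)}$.
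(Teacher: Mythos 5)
Your proposal is correct, and its overall skeleton coincides with the paper's: use that the initial frog's path $\pi$ is a $p$-random walk independent of the configuration, so that after conditioning on $\pi$ the variables $\eta_x$, $x\in\pi$, are still i.i.d.; extract pairwise disjoint sets $L_k\subseteq V_k\setminus V_{k-1}$ of visited sites with $\abs{L_k}\geq c\,\alpha^{2k}$; apply Lemma~\ref{ml} with $r=\tfrac{d+1}{2}$, $c_3=2$, $\beta=\alpha$ to get $M_k\geq\exp\bigl(\alpha^{4k/(d+1)}\bigr)$ infinitely often; and observe that this threshold eventually dominates $\alpha^{(d+1)(k-1)}$, so $A_k$ occurs for arbitrarily large $k$. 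Where you genuinely diverge is the construction of the $L_k$. The paper does this \emph{deterministically}: $L_1$ consists of the first $\alpha^2-1$ distinct vertices of $\pi\setminus\{0\}$, and $L_i$ of the next $\alpha^{2i}-\alpha^{2i-2}$ distinct vertices of $\pi$ lying outside $V_{i-1}$; since the walk is nearest-neighbour, each newly visited distinct vertex can increase the radius of the visited range by at most one, so these vertices automatically lie in $V_i$, giving $L_i\subseteq V_i\setminus V_{i-1}$ with $\abs{L_i}=(1-\alpha^{-2})\alpha^{2i}$, and the only probabilistic input is that $\pi$ contains infinitely many distinct vertices. You instead exploit the drift: taking the first-passage points $W_v$ of the first coordinate for $\alpha^{2(k-1)}<v\leq\alpha^{2k}$ gives $W_v\notin V_{k-1}$ for free, but you must then pay for the inclusion $W_v\in V_k$ with the quantitative estimate $P(T_k^c)\leq C\exp(-c\alpha^{2k})$ plus Borel--Cantelli. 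That estimate does hold as you sketch it (a large-deviation bound for $\tau_{\alpha^{2k}}$ using that the first coordinate is skip-free with positive drift, and Azuma's maximal inequality for the transverse coordinates, which are centred martingales with bounded increments because \eqref{drift} forces $p(e_j)=p(-e_j)$ for $j\geq2$; the demanded deviation is of the order of the time horizon, whence the exponential bound), and your re-indexing remark correctly absorbs the random threshold $k_0$ into the constant $c_2$ of Lemma~\ref{ml}. So your argument is valid, but it invokes machinery the paper avoids entirely; the trade-off is that the paper's combinatorial construction is shorter and would work for any walk visiting infinitely many distinct sites (no drift needed in this lemma), whereas yours leans on the drift and on concentration estimates, in exchange for the more explicit geometric description of the visited sites as level-hitting points.
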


\begin{proof}[Proof of Lemma \ref{finlemma}]
Denote by $\pi$ the path of the initial frog starting from the origin. By the properties of the underlying random walk, clearly, $\pi$ contains infinitely many different vertices. We are going to use Lemma \ref{ml} with $J=\pi$, $Y_x=\eta_x$, 
$x\in\pi$, and $r=(d+1)/2$. The pairwise disjoint sets $L_i$, $i\in\N$, are constructed inductively as follows: Let $L_1$ contain the first $\alpha^2-1$ pairwise different vertices in $\pi\setminus\{0\}$. Clearly, $L_1\subseteq V_1$. If $L_{i-1}$ for $i\geq2$ has already been constructed, let $L_i$ contain exactly the next $\alpha^{2i}-\alpha^{2i-2}$ vertices in $\pi$, which are not contained in $V_{i-1}$. Then, $L_i\subseteq V_i\setminus V_{i-1}$. Note that the sets $L_i$ satisfy 
$l_i:=\abs{L_i}\geq c_2\alpha^{2i}$, where $c_2=1-\alpha^{-2}$. Hence, from Lemma \ref{ml} (with $c_3=2$, $c=1$, $\beta=\alpha$ and $r=(d+1)/2$) we conclude that $\Prob_\mu$-a.s.
\begin{equation}\label{fl1}
M_i=\max_{x\in L_i}\eta_x\geq \exp\bigl(\alpha^\frac{4i}{d+1}\bigr)\text{  infinitely often.}
\end{equation}
In particular, $\Prob_\mu$-a.s. for each $k_0\in\N$ there exists a $k\geq k_0$ such that 
\[M_k\geq \alpha^{(k-1)(d+1)}\,,\]
implying that $P$-a.s. the event $A_k$ happens for arbitrarily large values of $k$.\\
\end{proof}

\section{Proofs of auxiliary lemmas}\label{lemmas}
This section is devoted to the proofs of Lemmas \ref{ml} and \ref{htlemma}. In order to prove Lemma \ref{ml} 
we need some facts about the behaviour of the maxima of nonnegative i.i.d. random variables, some of which rely on the following simple lemma on real sequences:

\begin{lemma}\label{rnlemma}
Let $u:[0,\infty)\rightarrow[0,\infty)$ be an increasing and invertible function and let $(y_n)_{n\in\N}$ be a sequence of numbers in the interval $[a,\infty)$. 
For $n\in\N$ let $m_n:=\max_{1\leq j\leq n} y_j$. Then, the following two conditions are equivalent:

\begin{enumerate}[{\normalfont (i)}]
 \item $m_n\geq u^{-1}(n)$ for infinitely many $n\in\N$
 \item $y_n\geq u^{-1}(n)$ for infinitely many $n\in\N$
\end{enumerate}
\end{lemma}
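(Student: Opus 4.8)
The plan is to prove the two implications separately, putting the trivial direction first and concentrating on the substance in the reverse one.

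The implication (ii)$\Rightarrow$(i) is immediate: since $m_n=\max_{1\le j\le n}y_j\ge y_n$ for every $n$, any index $n$ with $y_n\ge u^{-1}(n)$ also satisfies $m_n\ge y_n\ge u^{-1}(n)$. Hence the set of indices witnessing (ii) is contained in the set of indices witnessing (i), and if the former is infinite so is the latter.

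For (i)$\Rightarrow$(ii) I would argue by contraposition, and I would first record two monotonicity facts. Since $u$ is increasing and invertible, its inverse $u^{-1}$ is increasing as well, and $u^{-1}(n)\to\infty$ as $n\to\infty$: indeed, were $u^{-1}$ bounded by some $M$, then applying the increasing function $u$ would give $n=u\bigl(u^{-1}(n)\bigr)\le u(M)$ for all $n\in\N$, which is absurd. Now assume (ii) fails, i.e.\ $y_n\ge u^{-1}(n)$ holds for only finitely many $n$, and choose $N$ so large that $y_n<u^{-1}(n)$ for every $n>N$.

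The key step is to show that, under this assumption, any witness of a large maximum must come from the first $N$ terms. Suppose $m_n\ge u^{-1}(n)$ for some $n$; then there is an index $j\le n$ with $y_j\ge u^{-1}(n)$. Since $u^{-1}$ is increasing and $j\le n$, this yields $y_j\ge u^{-1}(n)\ge u^{-1}(j)$, so $j$ is one of the finitely many indices witnessing (ii), whence $j\le N$. Consequently $u^{-1}(n)\le y_j\le\max_{1\le j\le N}y_j=:Y<\infty$ whenever $m_n\ge u^{-1}(n)$. Because $u^{-1}(n)\to\infty$, the inequality $u^{-1}(n)\le Y$ can hold for only finitely many $n$, so $m_n\ge u^{-1}(n)$ occurs only finitely often; that is, (i) fails. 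This proves $\neg$(ii)$\Rightarrow\neg$(i), hence (i)$\Rightarrow$(ii), and the two conditions are equivalent. The main obstacle is precisely the observation in this last paragraph: that a single large term $y_j$ realizing the running maximum $m_n$ automatically satisfies the pointwise condition $y_j\ge u^{-1}(j)$, thanks to the monotonicity of $u^{-1}$, together with the fact that $u^{-1}(n)$ diverges so that only finitely many $n$ can be explained by the bounded block of the first $N$ values.
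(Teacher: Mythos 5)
Your proof is correct, but it takes a genuinely different route from the paper's. Both arguments rest on the same two ingredients --- $u^{-1}$ is increasing and unbounded, and an index $j$ attaining a maximum $m_n\geq u^{-1}(n)$ is automatically a pointwise witness, since $y_j=m_n\geq u^{-1}(n)\geq u^{-1}(j)$ --- but the paper proves (i)$\Rightarrow$(ii) \emph{directly}: it first extracts one witness from the minimal $n_0$ with $m_{n_0}\geq u^{-1}(n_0)$ (where necessarily $m_{n_0}=y_{n_0}$), and then shows every witness $n_1$ is followed by a strictly larger one, by choosing $k$ with $u^{-1}(k)>y_{n_1}$, invoking (i) to get $n>k$ with $m_n\geq u^{-1}(n)$, and taking the minimal $n_2\in\{k+1,\dotsc,n\}$ with $m_{n_2}\geq u^{-1}(n)$, so that the maximum there must be attained by $y_{n_2}$ itself. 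Your contrapositive argument replaces this iterative construction of ever-larger witnesses by a single boundedness argument: if only finitely many pointwise witnesses exist, they all lie in $\{1,\dotsc,N\}$, so any $n$ with $m_n\geq u^{-1}(n)$ forces $u^{-1}(n)\leq\max_{1\leq j\leq N}y_j<\infty$, which can occur only finitely often because $u^{-1}(n)\to\infty$. This is arguably cleaner: it needs no nested minimal choices (and it sidesteps the paper's small index typo, $m_{n_1-1}$ where $m_{n_2-1}$ is meant). What the paper's direct version buys is an explicit recipe producing larger and larger witnesses; for the way the lemma is used later (to transfer ``infinitely often'' statements between $M_n'$ and $Y_n$), the plain equivalence your argument delivers is all that is required.
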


\begin{proof}[Proof of Lemma \ref{rnlemma}]
 Of course, (ii) trivially implies (i). So let us prove the converse. Let 
\[n_0:=\inf\{n\in\N\,:\, m_n\geq u^{-1}(n)\}\,.\]
By (i) $n_0$ is finite and $m_{n_0}=y_{n_0}$. Hence, there is an $n\in\N$ such that $y_n\geq u^{-1}(n)$. It thus suffices to show that for each $n_1\in\N$ 
with $y_{n_1}\geq u^{-1}(n_1)$ there is a further $n_2>n_1$ such that $y_{n_2}\geq u^{-1}(n_2)$. Since $u^{-1}$ is unbounded, there is a $k\in\N$ such that 
$u^{-1}(k)>y_{n_1}$. By (i) there is an $n>k$ such that 
\[m_n\geq u^{-1}(n)>u^{-1}(k)>y_{n_1}\,,\]
since $u^{-1}$ is also increasing. Now, choose $n_2\in\{k+1,\dotsc,n\}$ minimal such that $m_{n_2}\geq  u^{-1}(n)$. Then, $m_{n_2-1}<u^{-1}(n)$ and 
\[u^{-1}(n_2)\leq u^{-1}(n)\leq m_{n_2}=\max(m_{n_1-1},y_{n_2})=y_{n_2}\,,\]
since $m_{n_1-1}<u^{-1}(n)$.\\
\end{proof}

For a sequence $(Y_j)_{j\in\N}$ of nonnegative random variables and $n\in\N$ we define 
\begin{equation}\label{defMn'}
 M_n':=\max_{1\leq j\leq n} Y_j\,.
\end{equation}

\begin{lemma}\label{maxlemma}
 Let $(Y_i)_{i\in\N}$ be an i.i.d. sequence of nonnegative random variables and let $u:[0,\infty)\rightarrow[0,\infty)$ be an increasing and invertible function. 
\begin{enumerate}[{\normalfont (a)}]
 \item If $E[u(Y_1)]<\infty$, then $P(M_n'< u^{-1}(n) \text{ eventually })=1$.
 \item If $E[u(Y_1)]=\infty$, then $P(M_n'\geq u^{-1}(n) \text{ infinitely often })=1$.
\end{enumerate}
\end{lemma}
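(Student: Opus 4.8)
The plan is to strip away the dependence among the maxima $M_n'$ and reduce both parts to a statement about a sum of probabilities of \emph{independent} events, which is then dispatched by the two Borel--Cantelli lemmas. The key tool is Lemma \ref{rnlemma}, applied pathwise: along each realization the sequence $y_n:=Y_n$ lies in $[0,\infty)$ and has running maximum $m_n=M_n'$, so conditions (i) and (ii) of Lemma \ref{rnlemma} are equivalent for it. Consequently the two events
\[
\{M_n'\geq u^{-1}(n)\text{ infinitely often}\}\quad\text{and}\quad\{Y_n\geq u^{-1}(n)\text{ infinitely often}\}
\]
coincide. This is the decisive step, since the individual events $A_n:=\{Y_n\geq u^{-1}(n)\}$, $n\in\N$, are independent (the $Y_n$ being independent), whereas the events $\{M_n'\geq u^{-1}(n)\}$ are not; the reduction via Lemma \ref{rnlemma} is precisely what lets us trade the correlated maxima for an independent family and thereby make the converse Borel--Cantelli lemma available in part (b).

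First I would rewrite $P(A_n)$ in a form amenable to a sum--integral comparison. Since $u$ is an increasing bijection of $[0,\infty)$ and the $Y_n$ are identically distributed,
\[
P(A_n)=P\bigl(Y_1\geq u^{-1}(n)\bigr)=P\bigl(u(Y_1)\geq n\bigr)\,,
\]
using that, for an increasing bijection $u$, the relation $y\geq u^{-1}(n)$ is equivalent to $u(y)\geq n$. Applying the standard sandwich bounds
\[
\sum_{n=1}^\infty P(Z\geq n)\leq E[Z]\leq 1+\sum_{n=1}^\infty P(Z\geq n)
\]
to the nonnegative random variable $Z:=u(Y_1)$ then shows that $\sum_{n=1}^\infty P(A_n)<\infty$ if and only if $E[u(Y_1)]<\infty$.

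With this dichotomy in hand, the two parts follow immediately. For part (a), the hypothesis $E[u(Y_1)]<\infty$ gives $\sum_n P(A_n)<\infty$, so the first Borel--Cantelli lemma yields $P(A_n\text{ infinitely often})=0$; by the event identity from Lemma \ref{rnlemma} this equals $P(M_n'\geq u^{-1}(n)\text{ infinitely often})=0$, i.e.\ $P(M_n'<u^{-1}(n)\text{ eventually})=1$. For part (b), the hypothesis $E[u(Y_1)]=\infty$ gives $\sum_n P(A_n)=\infty$, and since the $A_n$ are independent the second Borel--Cantelli lemma gives $P(A_n\text{ infinitely often})=1$, hence $P(M_n'\geq u^{-1}(n)\text{ infinitely often})=1$.

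Because the argument is a clean reduction to Borel--Cantelli, I do not anticipate a serious obstacle; the points needing care are minor. One must ensure $u^{-1}(n)$ is well defined for every $n\in\N$, which is guaranteed by the assumed invertibility of $u$ (an increasing injection of $[0,\infty)$ onto itself), and one must verify that $\{u(Y_1)\geq n\}=\{Y_1\geq u^{-1}(n)\}$ holds exactly, which again follows since $u$ is a strictly increasing bijection. The conceptual heart of the proof is simply the realization, via Lemma \ref{rnlemma}, that the behaviour of the dependent maxima $M_n'$ along the diagonal $u^{-1}(n)$ is governed entirely by the behaviour of the independent summands $Y_n$, after which the expectation criterion $E[u(Y_1)]\lessgtr\infty$ controls the convergence of $\sum_n P(A_n)$ and the two Borel--Cantelli lemmas finish the proof.
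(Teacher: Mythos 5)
Your proof is correct and takes essentially the same route as the paper: both reduce the statement about the dependent maxima $M_n'$ to the independent events $\{Y_n\geq u^{-1}(n)\}$ via Lemma \ref{rnlemma} (the paper actually needs only the nontrivial direction for part (a), since for part (b) the trivial inclusion $\{Y_n\geq u^{-1}(n)\text{ i.o.}\}\subseteq\{M_n'\geq u^{-1}(n)\text{ i.o.}\}$ suffices), then show that $\sum_n P(Y_1\geq u^{-1}(n))$ is finite or infinite according as $E[u(Y_1)]$ is, and finish with the two Borel--Cantelli lemmas. The only cosmetic difference is that you control the sum by the discrete sandwich bound $\sum_{n\geq1}P(Z\geq n)\leq E[Z]\leq 1+\sum_{n\geq1}P(Z\geq n)$, whereas the paper compares the sum directly with the integral $\int_0^\infty P(u(Y_1)\geq x)\,dx=E[u(Y_1)]$; the substance is identical.
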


\begin{proof}
 We first prove (a). Since the $Y_n$ are identically distributed and also $u^{-1}$ is increasing, we have 
\begin{align*}
 \sum_{n=1}^\infty P(Y_n\geq u^{-1}(n))&=\sum_{n=1}^\infty P(Y_1\geq u^{-1}(n))\leq \int_0^\infty  P(Y_1\geq u^{-1}(x))dx\\
&= \int_0^\infty P(u(Y_1)\geq x)dx=E[u(Y_1)]<\infty\,.
\end{align*}
From the first Borel-Cantelli lemma we conclude that $P(Y_n \geq u^{-1}(n) \text{ infinitely often })=0$ and from Lemma \ref{rnlemma} we obtain 
$P(M_n'\geq u^{-1}(n) \text{ infinitely often })=0$, which is equivalent to the assertion.\\ 
Now, we turn to the proof of (b). By assumption we have 
\begin{align*}
 \sum_{n=0}^\infty P(Y_n\geq u^{-1}(n))&=\sum_{n=0}^\infty P(Y_1\geq u^{-1}(n))\geq\int_0^\infty P(Y_1\geq u^{-1}(x))dx\\
&=E[u(Y_1)]=\infty\,.
\end{align*}
By independence, the second Borel-Cantelli lemma implies that 
\[P(M_n'\geq u^{-1}(n) \text{ infinitely often })\geq P(Y_n\geq u^{-1}(n) \text{ infinitely often })=1\,.\]
\end{proof}

\begin{cor}\label{maxcor}
 Let $(Y_i)_{i\in\N}$ be an i.i.d. sequence of nonnegative random variables and let $r>0$.
\begin{enumerate}[{\normalfont (a)}]
 \item If $E[\log^+(Y_1)^r]<\infty$, then for all constants $c,L>0$ 
\[P\Bigl(\max_{1\leq i\leq\lfloor Ln^r\rfloor}Y_i< \exp\bigl(cL^{1/r}n\bigr) \text{ eventually }\Bigr)=1\,.\]
\item If $E[\log^+(Y_1)^r]=\infty$, then for every constant $c>0$ 
\[P\Bigl(M_n'\geq \exp\bigl(cn^{1/r}\bigr) \text{ infinitely often }\Bigr)=1\,.\]
\item If $E[\log^+(Y_1)^r]=\infty$, then for every constant $c>0$ and every non-decreasing sequence $(s_i)_{i\in\N}$ 
of positive integers such that $\lim_{i\to\infty}s_i=\infty$ and $\inf_{i\geq2}\frac{s_{i-1}}{s_i}>0$
\[P\Bigl(M_{s_i}'\geq \exp\bigl(c{s_i}^{1/r}\bigr) \text{ for infinitely many }i\Bigr)=1\,.\]
\end{enumerate}
\end{cor}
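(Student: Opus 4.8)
The plan is to derive all three parts from Lemma \ref{maxlemma} by inserting a suitable increasing invertible function $u$, and afterwards to reduce part (c) to part (b). Throughout I would use the function $u(y):=c^{-r}\bigl(\log(1+y)\bigr)^r$ for $y\ge0$, which is a strictly increasing bijection of $[0,\infty)$ onto itself with inverse $u^{-1}(x)=\exp\bigl(cx^{1/r}\bigr)-1$. Since $\log(1+y)$ and $\log^+(y)$ differ only by bounded additive and multiplicative constants as $y\to\infty$, the expectations $E[u(Y_1)]$ and $E[\log^+(Y_1)^r]$ are finite, respectively infinite, simultaneously, so the hypotheses of Lemma \ref{maxlemma} translate directly into the hypotheses of the corollary.

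For part (a) I would invoke Lemma \ref{maxlemma}(a), which gives that almost surely $M_m'<u^{-1}(m)<\exp\bigl(cm^{1/r}\bigr)$ for all sufficiently large $m$. Specializing to $m=m_n:=\lfloor Ln^r\rfloor$, which tends to infinity and satisfies $m_n\le Ln^r$, yields for all large $n$ that $\max_{1\le i\le m_n}Y_i=M_{m_n}'<\exp\bigl(cm_n^{1/r}\bigr)\le\exp\bigl(cL^{1/r}n\bigr)$, as claimed. For part (b) I would apply Lemma \ref{maxlemma}(b), but with $c$ replaced by $2c$, obtaining almost surely $M_n'\ge\exp\bigl(2cn^{1/r}\bigr)-1$ for infinitely many $n$; since $\exp\bigl(2cn^{1/r}\bigr)-1\ge\exp\bigl(cn^{1/r}\bigr)$ for all large $n$, all but finitely many of these indices already satisfy $M_n'\ge\exp\bigl(cn^{1/r}\bigr)$. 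The passage to $2c$ serves only to absorb the harmless additive $-1$ produced by $u^{-1}$.

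For part (c) I would reduce to part (b). Put $\rho:=\inf_{i\ge2}s_{i-1}/s_i\in(0,1]$ and apply part (b) with constant $c_0:=c\,\rho^{-1/r}$, so that almost surely $M_n'\ge\exp\bigl(c_0 n^{1/r}\bigr)$ for infinitely many $n$. For each such (large) $n$ set $i(n):=\min\{\,i\ge1:\,s_i\ge n\,\}$, which is finite because $s_i\to\infty$, and note that $s_{i(n)-1}<n\le s_{i(n)}$. Monotonicity of the running maxima gives $M_{s_{i(n)}}'\ge M_n'$, while the ratio hypothesis yields $n>s_{i(n)-1}\ge\rho\,s_{i(n)}$ and hence $n^{1/r}\ge\rho^{1/r}s_{i(n)}^{1/r}$. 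Combining these, $M_{s_{i(n)}}'\ge\exp\bigl(c_0 n^{1/r}\bigr)\ge\exp\bigl(c_0\rho^{1/r}s_{i(n)}^{1/r}\bigr)=\exp\bigl(c\,s_{i(n)}^{1/r}\bigr)$. Finally, $s_{i(n)}\ge n\to\infty$ forces $i(n)\to\infty$, so infinitely many distinct indices $i$ occur, which is exactly the assertion.

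The computations in (a) and (b) are routine once the function $u$ is chosen; the only points requiring care there are the comparison of $E[u(Y_1)]$ with $E[\log^+(Y_1)^r]$ and the bookkeeping with the floor and the additive $-1$. I expect the genuine obstacle to be part (c): one must use the hypothesis $\inf_{i\ge2}s_{i-1}/s_i>0$ in an essential way to convert a lower bound holding at a free index $n$ into one at the sampled index $s_{i(n)}$, and at the same time confirm that the procedure hits infinitely many \emph{distinct} indices $i$ rather than repeatedly the same term of the sequence.
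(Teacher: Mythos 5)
Your proof is correct and follows essentially the same route as the paper: parts (a) and (b) are read off from Lemma \ref{maxlemma} with a logarithmic choice of $u$, and part (c) is reduced to part (b) via the rescaled constant $c\rho^{-1/r}$ together with the sandwich $s_{i(n)-1}<n\leq s_{i(n)}$, which is exactly the paper's argument phrased from the other direction. Your choice $u(y)=c^{-r}\bigl(\log(1+y)\bigr)^r$ (with the $2c$ device to absorb the additive $-1$ in $u^{-1}$) is in fact slightly cleaner than the paper's $u(x)=(\log^+(x)/c)^r$, which is constant on $[0,1]$ and hence not literally invertible as Lemma \ref{maxlemma} demands.
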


\begin{proof}
(a) follows from Lemma \ref{maxlemma} (a) by choosing $u(x)=(\log^+(x)/c)^r$ and noting that $M_n'< \exp\bigl(cn^{1/r}\bigr)$ eventually implies 
$\max_{1\leq i\leq\lfloor Ln^r\rfloor}Y_i< \exp\bigl(cL^{1/r}n\bigr)$ eventually. Similarly, (b) follows from Lemma \ref{maxlemma} (b). To prove (c) choose a set $G$ with $P(G)=1$ according to (b) such that for all $\om\in G$ there exists a strictly increasing sequence $(n_k)_{k\in\N}$ (depending on $\om$) with  
\[M_{n_k}'(\om)\geq \exp\bigl(\tilde{c}n_k^{1/r}\bigr) \text{for all } k\in\N\,,\]
where $\tilde{c}:=c (\inf_{i\geq2}s_{i-1}/s_i)^{-1/r}<\infty$ by the assumptions on the sequence $(s_i)_{i\in\N}$.
Then, for each $\om\in G$ and for infinitely many values of $i\in\N$ there is a $k=k_i$ such that $s_{i-1}<n_k\leq s_i$. 
The claim now follows from the chain of inequalities 
\[M_{s_i}'(\om)\geq M_{n_k}(\om)\geq \exp\Bigl(\tilde{c}n_k^{1/r}\Bigr)
\geq\exp\Bigl(s_i^{1/r}\tilde{c}\bigl(\frac{s_{i-1}}{s_i}\bigr)^{1/r}\Bigr)
\geq\exp\bigl(c s_i^{1/r}\bigr)\,.\]
\end{proof}

\begin{proof}[Proof of Lemma \ref{ml}]
For $i\in\N$ define 
\begin{equation}\label{ml1}
M_i^\star:=\max\limits_{j\in\bigcup_{k\leq i}L_i}Y_j=\max_{k\leq i} M_k\,.
\end{equation}
Note that by disjointness of the sets $L_i$ we have for the cardinality of $\bigcup_{k\leq i}L_i$:
\begin{equation}\label{ml2}
\Bigl|\bigcup_{k\leq i}L_i\Bigr|=\sum_{k=1}^i l_k\geq \sum_{k=1}^i c_2\beta^{c_3k}
=c_2\beta^{c_3}\frac{\beta^{c_3i}-1}{\beta^{c_3}-1}\geq\lceil\tilde{c}\beta^{c_3i}\rceil=:s_i\,,
\end{equation}
where  $\tilde{c}>0$ is a constant depending only on $c_2,c_3$ and $\beta$.  Hence, for each $i\in\N$, $M_i^\star$ is stochastically larger than $M_{s_i}'$ from Corollary \ref{maxcor} (c) and the integer sequence 
$(s_i)_{i\in\N}$ satisfies the above assumptions. In particular, we have 
\begin{equation}\label{ml3}
P\Bigl(M_i^\star\geq \exp\bigl(c'{s_i}^{1/r}\bigr) \text{ for infinitely many }i\Bigr)=1
\end{equation}
 for each finite constant $c'>0$. This immediately implies that 
\begin{equation}\label{ml4}
P\Bigl(M_i^\star\geq \exp\bigl(c\beta^\frac{c_3i}{r}\bigr) \text{ for infinitely many }i\Bigr)=1
\end{equation}
for each finite constant $c>0$. Now using 
\[M_i^\star=\max_{1\leq k\leq i}M_k\]
the claim follows from Lemma \ref{rnlemma} applied to the function $u(x)=r\frac{\log\log x-\log c}{c_3\log\beta}$ .\\
\end{proof}

\begin{proof}[Sketch of the proof of Lemma \ref{htlemma}]
First note that the probability $f(x,y)$ is also the probability that the continuous time random walk (CTRW)\\ $(X_t)_{t>0}=(X_t^{(1)},\dotsc,X_t^{(d)})_{t>0}$ corresponding to $p$ ever visits $y$ if it is starting at $x$. The benefit of working in continuous time here is that for CTRW the coordinates are independent, which is not true 
for discrete time random walks. Because of \eqref{drift}, letting $\tau:=\inf\{t>0\,:\,X_t^{(1)}=y_1\}$, we know that $P_x(\tau<\infty)=1$. Furthermore,
\begin{align}\label{ht1}
 f(x,y)&=P_x\bigl(\exists\, t>0\,:\,X_t=y\bigr)\geq P_x\bigl(X_\tau=y)\notag\\
 &=\int_0^\infty P_x\bigl(X_\tau=y\,\bigl|\,\tau=t\bigr)P_x\bigl(\tau \in dt\bigr)\notag\\
 &=\int_0^\infty P_x\bigl((X_t^{(2)},\dotsc,X_t^{(d)})=(y_2,\dotsc,y_d)\bigr)P_x\bigl(\tau \in dt\bigr)\notag\\
 &\geq\int_{\ga_1(y_1-x_1)}^{\ga_2(y_1-x_1)} P_x\bigl((X_t^{(2)},\dotsc,X_t^{(d)})=(y_2,\dotsc,y_d)\bigr)P_x\bigl(\tau \in dt\bigr)\,,
 \end{align}
 where $0<\ga_1<\ga_2<\infty$ are chosen such that $P_x\bigl(\ga_1(y_1-x_1)\leq\tau\leq \ga_2(y_1-x_1)\bigr)\geq1/2$. Now, since $\abs{(y_2-x_2,\dotsc,y_d-x_d)}\leq\gamma\sqrt{y_1-x_1}$, by the local CLT for continuous time random walk %(\verb|still need a reference|)
 there is a universal constant $c>0$ such that 
 \begin{equation}\label{ht2}
  P_x\bigl((X_t^{(2)},\dotsc,X_t^{(d)})=(y_2,\dotsc,y_d)\bigr)\geq\frac{c}{t^{\frac{d-1}{2}}}
 \end{equation}
 for all $t\geq \ga_1(y_1-x_1)$. Thus, from \eqref{ht1} and \eqref{ht2} we get 
 \begin{align*}
  f(x,y)&\geq\frac{c}{(\ga_2(y_1-x_1))^\frac{d-1}{2}}\int_{\ga_1(y_1-x_1)}^{\ga_2(y_1-x_1)}P_x\bigl(\tau \in dt\bigr)\\
  &\geq \frac{c}{2(\ga_2(y_1-x_1))^\frac{d-1}{2}}\,,
 \end{align*}
 yielding the claim with $c_1:=\frac{1}{2}c\ga_2^{-\frac{d-1}{2}}$.\\
\end{proof}

%\normalem
\bibliography{frogs}{}
\bibliographystyle{amsplain}

\end{document}